\newtheorem{theorem}{Theorem}[section]
\newtheorem{lemma}[theorem]{Lemma}
\theoremstyle{definition}
\newtheorem{definition}[theorem]{Definition}
\newtheorem*{theorem A}{Theorem A}
\newtheorem*{theorem B}{Corollary B}
\newtheorem*{theorem C}{Corollary C}
\newtheorem*{theorem D}{Corollary D}
\newtheorem{proposition}{Proposition}
\theoremstyle{remark}
\numberwithin{equation}{section}
\begin{document}

\title{On the centralizers of rescaling separating differentiable vector fields}


\author{Bo Han}
\address{School of Mathematical Sciences, Beihang University, Beijing, 100191, P.R. China}
\email{hanbo@buaa.edu.cn}

\author{Xiao Wen}
\address{LMIB, Institute of Artificial Intelligence \& School of Mathematical Sciences, Beihang University, Beijing, 100191, P.R. China}
\address{ Beijing Zhongguancun Laboratory, Beijing, 100094, P.R.China}
\email{wenxiao@buaa.edu.cn}

\thanks{X.W. was partially supported by National Natural Science
Foundation of China (No. 12071018), National Key R\&D Program of China (No. 2022YFA1005801) and the Fundamental
Research Funds for the Central Universities.}


\subjclass[2020]{37D20, 37C10, 37C20}

\keywords{Centralizer, Expansiveness, Hyperbolicity}

\begin{abstract}
We introduce a new
version of expansiveness similar to separating property for flows. Let $M$ be a compact Riemannian manifold without boundary and $X$ be
a $C^1$ vector field on $M$ that generates a flow $\varphi_t$ on $M$.  We call $X$ {\it rescaling separating} on a compact invariant set $\Lambda$ of $X$ if there is $\delta>0$ such that, for any $x,y\in \Lambda$, if $d(\varphi_t(x), \varphi_{t}(y))\le \delta\|X(\varphi_t(x))\|$ for all $t\in \mathbb R$, then $y\in{\rm Orb}(x)$. We prove that if $X$ is rescaling separating on $\Lambda$ and every singularity of $X$ in $\Lambda$ is hyperbolic, then for any $C^1$ vector field $Y$, if the flow generated by $Y$ is commuting with $\varphi_t$ on $\Lambda$, then $Y$ is collinear to $X$ on $\Lambda$. As applications of the result, we show that the centralizer of a rescaling separating $C^1$ vector field without nonhyperbolic singularity is quasi-trivial and there is is an open and dense set $\mathcal{U}\subset\mathcal{X}^1(M)$ such that for any star vector field $X\in\mathcal{U}$, the centralizer of $X$ is collinear to $X$ on the chain recurrent set of $X$.
\end{abstract}

\maketitle

\section{Introduction}
In this paper, we study the centralizers of $C^1$ vector fields containing singularities. The study of centralizers of dynamical systems has appeared from 1970s, together with abundant results of hyperbolic dynamics. In 1970, Walters \cite{W} investigated the continuous transformations of metric spaces with discrete centralizers and unstable centralizers and proved that the expansive homeomorphisms have unstable centralizers. In 1973, Kato-Morimoto \cite{KM} proved that the centralizers of Anosov flows is pseudo-trivial. Then in 1976, Oka \cite{Oka} extended the above conclusions to the expansive flows, proving that the expansive flow has quasi-trivial centralizers open and densely. Next Palis-Yoccoz \cite{PY1} and Sad \cite{Sad} proved that the centralizers of $C^{\infty}$ systems with Axiom A plus strong transversal conditions have trivial centralizers on a $C^{\infty}$ open and dense subset.

In this note, we concentrate on the centralizes of flows with some weaker expansive properties. Following the idea of Oka, recently in 2018, Bonomo-Rocha-Varandas \cite{BRV} studied the centralizers of Komuro expansive flows and proved that the centralizers of $C^{\infty}$ Komuro expansive transitive flows plus singularities hyperbolic and non-resonance conditions are trivial. Martin Leguil, Davi Obata, and Bruno Santiago \cite{LOS} proved that if the singularities of kinematic expansive flow on a three-dimensional manifold were hyperbolic, the centralizer of the vector field trivial. Here we will give an extension of the above conclusions by weakening the definitions of expansiveness. In the weaker definitionss of expansiveness, we will consider the velocity of the flow near the singularity.

Let $M$ be a compact Riemannian manifold without boundary. Denote by $\mathcal{X}^r(M)$ the $C^r$ ($r\geq 1$) vector fields on $M$. Let $\varphi_t=\varphi_t^X$ be the flow generated by a vector field $X\in\mathcal{X}^1(M)$. Let $\Lambda$ be a compact invariant set of $\varphi_t$. Flow $\varphi_t|_{\Lambda}$ is {\it expansive} if for any $\epsilon>0$ there is $\delta>0$ such that, for any $x$ and $y$ in $\Lambda$ and any  continuous functions $\theta: \mathbb R\to \mathbb R$, if $d(\varphi_t(x), \varphi_{\theta(t)}(y))\le \delta$ for all $t\in \mathbb R$, then $\varphi_{\theta(t)}(y)\in \varphi_{[-\epsilon, \epsilon]}(\varphi_{t}(x))$ for some $t\in \mathbb R$. In \cite{Oka}, it is proved that if $\varphi_t|_{\Lambda}$ is expansive, then $\varphi_t|_{\Lambda}$ has a kind of trivial centralizer: if a flow $\psi_t$ on $\Lambda$ verifying $\psi_t\circ\varphi_t|_\Lambda=\varphi_t\circ\psi_t|_\Lambda$, then there is a continuous function $A:\Lambda\to\mathbb{R}$ such that $\psi_t(x)=\varphi_{A(x)t}(x)$ for any $x\in\Lambda$ and $t\in\mathbb{R}$. It is known that every fixed point of an expansive flow should be isolated(\cite{BW}), hence Oka's work does not work for some well known chaotic dynamical systems such as Cherry flows, Lorenz attractors, etc. In \cite{BRV} and \cite{BFH}, the condition of expansiveness in the conclusion of Oka was generalized by Komuro expansiveness and kinematic expansiveness plus the hyperbolicity of singularities. Here we say that $\varphi_t|_\Lambda$ is {\it Komuro expansive}, if for any $\varepsilon>0$, there is $\delta>0$ such that the following holds: for any $x,y\in\Lambda$ and any increasing homeomorphism $\theta:\mathbb{R}\to\mathbb{R}$, if $d(\varphi_t(x), \varphi_{\theta(t)}(y))<\delta$ for every $t\in\mathbb{R}$, then $\varphi_{\theta(t)}(y)\in\varphi_{[-\varepsilon,\varepsilon]}(\varphi_t(x))$ for some $t\in\mathbb{R}$. We say that $\varphi_t|_{\Lambda}$ is {\it kinematic expansive}, if for any $\varepsilon>0$, there is $\delta>0$ such that the following holds: for any $x,y\in\Lambda$, if $d(\varphi_t(x), \varphi_t(y))<\delta$ for every $t\in\mathbb{R}$, then $y\in\varphi_{[-\varepsilon,\varepsilon]}(x)$. In \cite{LOS}, it is shown that once we want to get the similar results for triviality of centralizers, we can weaken the expansiveness further by considering the following separating property. Flow $\varphi_t|_{\Lambda}$ is {\it separating} if there exists a constant $\delta>0$ such that for any $x,y\in \Lambda$, if $d(\varphi_t(x), \varphi_t(y))<\delta$ for all $t\in\mathbb{R}$, then $y\in{\rm Orb}(x)$. In this paper we will improve the conclusions of \cite{BRV, LOS, Oka} by considering rescaling separating property instead of separating property for $C^1$ vector fields.

\begin{definition}
Let $X\in\mathcal{X}^1(M)$ and $\varphi_t$ be the flow generated by $X$ and $\Lambda$ be a compact invariant set of $\varphi_t$. We say that $\varphi_t|_\Lambda$ is rescaled separating if there is $\varepsilon>0$ such that for any $x, y\in\Lambda$, if $d(\varphi_t(x), \varphi_t(y))\leq\varepsilon\|X(\varphi_t(x))\|$ for all $t\in\mathbb{R}$, then $y\in{\rm Orb}(x)$.
\end{definition}

Denote by
$$\mathcal{Z}^r(\varphi_t|_{\Lambda})=\{Y\in\mathcal{X}^r(M): \psi_t(\Lambda)=\Lambda,\ \ \ \psi_s\circ\varphi_t|_\Lambda=\varphi_t\circ\psi_s|_\Lambda, \text{ for all } t, s\in\mathbb{R},$$
$$\text{        \ \ \ \ \ \ \ \ \ \ \                where } \psi_t \text{ is the flow generated by } Y\}.$$
and $\mathcal{Z}^r(X)=\mathcal{Z}^r(\varphi_t|_M)$ for any positive integer $r\geq 1$. As usual, denote by ${\rm Sing}(X)=\{x\in M: X(x)=0\}$ be the singularities of $X$. In the paper we prove the following theorem firstly.

\begin{theorem A}\label{thmA}
Let $X\in\mathcal{X}^1(M)$ and $\varphi_t$ be the flow generated by $X$ and $\Lambda$ be a compact invariant set of $\varphi_t$ such that $\varphi_t|_{\Lambda}$ is rescaling separating. If every singularity of $X$ in $\Lambda$ is hyperbolic, then for any $Y\in  \mathcal{Z}^1(\varphi_t|_{\Lambda})$, there is a continuous map $A:\Lambda\setminus {\rm Sing}(X)\to\mathbb{R}$ which is constant along the orbit of $\varphi_t$ such that $$\psi_t(x)=\varphi_{A(x)t}(x)$$ for any $x\in\Lambda\setminus {\rm Sing}(X)$, where $\psi_t$ is the flow generated by $Y$.
\end{theorem A}

Recall that if for any $Y\in\mathcal{Z}^k(X)$, there exists a $C^1$ function $f:M\to\mathbb{R}$ with $X(f)\equiv 0$ such that $Y=f\cdot X$, then we say $X\in\mathcal{X}^r(M)$ has {\it quasi-trivial $C^k$-centralizer} ($1\leq k\leq r$). Here we can get the following corollary which improves Theorem A of \cite{LOS} by applying Theorem A.

\begin{theorem B}\label{thmB}
If $X\in\mathcal{X}^1(M)$ is rescaling separating and every singularity of $X$ is hyperbolic, then $X$ has quasi-trivial $C^k$-centralizer.
\end{theorem B}

In dimension three, under enough regularity assumptions, \cite{LOS} obtained triviality of centralizers, under the assumption of kinematic expansiveness. Here we can give a weaker version of kinematic expansiveness similar to rescaling separating property.

\begin{definition}
Let $\varphi_t$ be a flow generated by a vector fields $X\in\mathcal{X}^1(M)$ and $\Lambda$ be a compact invariant set of $\varphi_t$. If for any $\varepsilon>0$, there is $\delta>0$ such that for any $x\in\Lambda, y\in \Lambda$, if $d(\varphi_t(x), \varphi_t(y))\leq\delta\|X(\varphi_t(x))\|$ for every $t\in\mathbb{R}$, then $y\in\varphi_{[-\varepsilon,\varepsilon]}(x)$, then we say $\Lambda$ is rescaling kinematic expansive. If $M$ is rescaling kinematic expansive for $\varphi_t$, we say that $X$ or $\varphi_t$ is rescaling kinematic expansive.
\end{definition}

If for any $Y\in\mathcal{Z}^k(X)$, there exists constant $c\in\mathbb{R}$ such that $Y=cX$, then we say that $X$ has {\it trivial $C^k$-centralizer}. The following is a generalization of Theorem F in \cite{LOS}.

\begin{theorem C}\label{thmC}
Let $M$ be a $3$-dimensional compact Riemannian manifold without boundary, and $X\in\mathcal{X}^3(M)$. If $X$ is rescaling kinematic expansive and every singularity of $X$ is hyperbolic, then $X$ has trivial $C^3$-centralizer.

\end{theorem C}

The motivation of rescaling separating property or rescaling kinematic expansiveness is from the rescaling expansiveness proposed by \cite{WW}. It is well known that for differentiable dynamical systems, expansiveness is closely related to hyperbolicity. To unify the hyperbolicity of Anosov flow and Lorenz attractor, C. Morales, M. Pacifico and E. Pujals \cite{MPP} proposed the notion of singular hyperbolicity. Whereas Anosov flow and Lorenz attractor does not satisfy the standard expansiveness and the kinematic expansiveness, one know that every singular hyperbolic set is rescaling expansive (\cite{WW}) and then rescaling kinematic expansive. Hence our Theorem A can be applied for singular hyperbolic sets or star systems as follows.

Recall that a vector field $X\in\mathcal{X}^1(M)$ is called a star system or $X\in\mathcal{X}^{*1}(M)$ if $X$ has a neighborhood $\mathcal{U}$ in $\mathcal{X}^1(M)$ such that each $Y\in\mathcal{U}$ has only finitely many singularities and at most countably many periodic orbits (or equivalently, all singularities and periodic orbits of each $Y\in\mathcal{U}$ are hyperbolic \cite{L,M}). The flow generated by star vector field is called star flow.

Recall that a pair of sequences $\{x_i\in M:0\leq i\leq k\}$ and $\{t_i\in \mathbb{R}:0\leq i\leq k-1\}, \ k\geq1$, is an $\varepsilon$-{\it pseudo orbit} from $x_0$ to $x_k$ for a flow $\varphi_t$, if $$t_i\geq1 \ \hbox{and} \ d(x_{i+1}, \varphi_{t_i}(x_i))<\varepsilon,$$
for every $0\leq i\leq k-1$.
We say that $x\in M$ is {\it chain recurrent} if for every $\varepsilon>0$, there is an $\varepsilon$-pseudo orbit from $x$ to $x$. We call the set of chain recurrent points, the {\it chain recurrent set} and we denote it by $\mathcal{R}(X)$, which $X$ is a vector field which generates flow $\varphi_t$. Here we can also get the following corollary by applying Theorem A.

\begin{theorem D}\label{thmD}
There is a $C^1$ open and dense subset $\mathcal{U}$ of $\mathcal{X}^1(M)$ such that if $X\in\mathcal{U}$
is a star flow then for any $Y\in\mathcal{Z}^1(\varphi_t|_{\mathcal{R}(X)})$,  $Y$ is collinear to $X$ on the chain recurrent set $\mathcal{R}(X)$.
\end{theorem D}

\section{Proof of Theorem A}
As usual, denote by
$$T_xM(r)=\{v\in T_xM: \|v\|<r\},$$ $$B_r(x)=\exp_x(T_xM(r)).$$ By the compactness of $M$, we can fix a constant $a>0$ such that $$m(D_p\exp_x)>2/3, ~ \|D_p\exp_x\|<3/2$$ for any $p\in T_xM(a)$. By the $C^1$ smoothness of $X$,
there are constants $L>0$ and a $C^1$ neighborhood $\mathcal{U}_0$ of $X$ such that for any $Y\in\mathcal{U}_0$ and $x\in M$ the
vector fields
$$\bar{Y}=(\exp_x^{-1})_*(Y|_{B_{a}(x)})$$ in
$T_xM(a)$ are locally Lipschitz vector fields with a Lipschitz constant $L$. We call $L$ a {\it local Lipschitz constant} of $X$ with respect to the neighborhood $\mathcal{U}_0$.

\begin{lemma}\label{lem1}
Let $X\in\mathcal{X}^1(M)$ and $\mathcal{U}_0$ be given as above. Then for any $\delta>0$, there is $\mu>0$, such that for any $Y\in\mathcal{U}_0, |t|\leq\mu$ and any $x\in M$, one has
$$d(\psi_t(x), x)\leq\delta\|Y(x)\|,$$
where $\psi_t$ is the flow generated by $Y$.
\end{lemma}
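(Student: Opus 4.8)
The plan is to work in the exponential chart at $x$, turn the flow into an ODE whose right-hand side is Lipschitz, and run a Gronwall estimate. Writing $\bar{Y}=(\exp_x^{-1})_*(Y|_{B_a(x)})$ on $T_xM(a)$, the point $\psi_t(x)$ corresponds to the solution $\bar\psi_t(0)$ of $\dot{p}=\bar{Y}(p)$ with $p(0)=0$, as long as it stays in $T_xM(a)$. Since $D_0\exp_x=\mathrm{id}$, one has $\bar{Y}(0)=Y(x)$, and the Lipschitz bound gives $\|\bar{Y}(p)\|\le\|Y(x)\|+L\|p\|$ for $p\in T_xM(a)$.

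First I would set $\gamma(t)=\|\bar\psi_t(0)\|$ and observe that $\gamma(0)=0$ and $\gamma'(t)\le\|\bar{Y}(\bar\psi_t(0))\|\le\|Y(x)\|+L\gamma(t)$ for as long as the trajectory remains in the chart. Integrating this linear differential inequality yields
$$\gamma(t)\le\frac{\|Y(x)\|}{L}\bigl(e^{Lt}-1\bigr)$$
for $t\ge0$ while $\bar\psi_t(0)\in T_xM(a)$; the case $t\le0$ is identical after reversing time.

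The delicate point is to guarantee that the trajectory does not leave the chart during $|t|\le\mu$, since the estimate above is only valid there. To handle this I would use that $M$ is compact and $\mathcal{U}_0$ is a $C^1$ neighborhood of $X$, so that $K:=\sup_{Y\in\mathcal{U}_0,\,x\in M}\|Y(x)\|<\infty$. A standard bootstrap (continuity) argument then shows that if $\mu$ is chosen with $\frac{K}{L}(e^{L\mu}-1)<a$, the first exit time of $\bar\psi_t(0)$ from $T_xM(a)$ exceeds $\mu$: otherwise, at the exit time the bound $\gamma(t)\le\frac{K}{L}(e^{Lt}-1)$ would force $\gamma$ to equal $a$ while remaining strictly below $a$, a contradiction. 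Hence the estimate of the previous paragraph holds throughout $|t|\le\mu$, uniformly in $x$ and $Y$.

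Finally, shrinking $\mu$ further so that $\frac{1}{L}(e^{L\mu}-1)\le\delta$, the displayed estimate gives $\gamma(t)\le\delta\|Y(x)\|$ for $|t|\le\mu$. Since the radial geodesic $s\mapsto\exp_x\!\bigl(s\,\bar\psi_t(0)/\|\bar\psi_t(0)\|\bigr)$, $s\in[0,\gamma(t)]$, joins $x$ to $\psi_t(x)=\exp_x(\bar\psi_t(0))$ and has length $\gamma(t)$, the Riemannian distance satisfies $d(\psi_t(x),x)\le\gamma(t)\le\delta\|Y(x)\|$, as desired. I expect the main obstacle to be exactly this uniform control of the exit time, that is, ensuring that the Lipschitz estimate stays applicable over the whole interval $|t|\le\mu$ and uniformly over $x\in M$ and $Y\in\mathcal{U}_0$; everything else is a routine Gronwall computation.
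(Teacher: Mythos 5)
Your argument is correct and is essentially the paper's: both run a Gronwall estimate off the local Lipschitz constant $L$ to get $d(\psi_t(x),x)\le c(t)\,\|Y(x)\|$ with $c(t)\to 0$ as $t\to 0$ (your $c(t)=(e^{Lt}-1)/L$ versus the paper's $te^{Lt}$), then choose $\mu$ accordingly. The paper's version is slightly more direct, bounding the arc length $\int_0^t\|Y(\psi_s(x))\|\,ds$ intrinsically via the standard estimate $\|Y(\psi_s(x))\|\le e^{L|s|}\|Y(x)\|$, which sidesteps the chart-exit bootstrap your chart-based formulation required.
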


\begin{proof}
Let $L$ be a local Lipschitz constant of $X$ with respect to $\mathcal{U}_0$. Let $Y$ be a vector field in $\mathcal{U}_0$ and $\psi_t$ be the flow generated by $Y$. It is well known that $\|Y(\psi_t(x))\|\leq e^{L|t|}\|Y(x)\|$ for any $t\in\mathbb{R}$. Given $\delta>0$, we can find $\mu>0$ satisfied $\mu e^{L\mu}<\delta$. For any $Y\in\mathcal{U}_0,|t|\le\mu$,
If $x$ is a singularity of $Y$, it is trivial that $d(\psi_t(x),x)\leq\delta\|Y(x)\|$. Now we assume that $x$ is not a singularity of $Y$ and $t\geq0$, one has
$$d(\psi_t(x),x)\leq\int^t_0\|\frac{d\psi_s(x)}{ds}\|ds=\int^t_0\|Y(\psi_s(x))\|ds$$
$$\leq\int^t_0e^{Ls}\|Y(x)\|ds\leq te^{Lt}\|Y(x)\|$$
$$\leq\mu e^{L\mu}\|Y(x)\|\leq\delta\|Y(x)\|.$$
Similar estimations holds for the case of $t\leq 0$. This ends the proof of the lemma.
\end{proof}

Recall that ${\rm Sing}(X)$ is the set of singularities of $X$. We call $x\in M$ a {\it regular point} if $x\in M\setminus {\rm Sing}(X)$. For a regular point $x\in M$ of $X$, denote  the {\it normal space} of $X(x)$ to be  $$N_x=N_x(X)=\{v\in T_x M:v\perp X(x)\}.$$ Given a constant $r>0$, we can take a box $$U_x(r\|X(x)\|)=\{v+tX(x)\in T_xM: v\in N_x, \|v\|\leq r\|X(x)\|, |t|\leq r\}$$ in $T_xM$. Define a $C^1$ map  $$F_x:U_x(r\|X(x)\|)\to M$$ to be $$F_x(v+tX(x))=\varphi_t(\exp_x(v)).$$
This map $F_x$ is called a {\it flowbox} of $X$ at $x$. In \cite{WW}, the following relative uniform version of flowbox theorem is proved.

\begin{proposition}[\cite{WW}, Proposition 2.2]\label{flowbox}
For any $C^1$ vector field $X$ on $M$, there is $0<r_0\leq \frac{1}{10L}$ such that for any regular point $x$ of ${X}$, $F_x:{U}_x(r_0\|X(x)\|)\to M$ is an embedding whose image contains no singularities of $X$, $m(D_pF_x)> 1/3$ and
$\|D_pF_x\|<3$ for every $p\in {U}_x(r_0\|X(x)\|)$.
\end{proposition}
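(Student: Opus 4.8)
The plan is to pass to normal coordinates at $x$ and to analyze $F_x$ as a perturbation of the identity, with the rescaling factor $\|X(x)\|$ ensuring that every estimate is scale-invariant, hence uniform in the regular point $x$ even as $x$ approaches a singularity. First observe that $F_x(0)=x$, and that by the mean value estimate along short geodesics and short orbit segments the image $F_x(U_x(r_0\|X(x)\|))$ is contained in $B_a(x)$ as soon as $r_0$ is small relative to $a/\max_M\|X\|$ (the normal part moves at most $\tfrac{3}{2}r_0\|X(x)\|$ and the flow part at most $r_0\max_M\|X\|$). This lets me study $G_x=\exp_x^{-1}\circ F_x$, a map from the convex box $U_x(r_0\|X(x)\|)\subset T_xM$ into $T_xM(a)$ with $G_x(0)=0$.

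Next I would compute $D_pF_x$ directly. Writing $p=v_0+t_0X(x)$ with $v_0\in N_x$ and $|t_0|\le r_0$, and using the orthogonal splitting $T_xM=N_x\oplus\mathbb{R}X(x)$, the flow direction maps by $D_pF_x(X(x))=X(F_x(p))$, while a normal vector $w\in N_x$ maps by $D_pF_x(w)=D_{\exp_x(v_0)}\varphi_{t_0}\circ D_{v_0}\exp_x(w)$. Each factor is then pinned down: the choice of $a$ gives $m(D_{v_0}\exp_x)>2/3$ and $\|D_{v_0}\exp_x\|<3/2$ (valid since $\|v_0\|\le r_0\|X(x)\|\le a$); the variational equation together with the Lipschitz constant $L$ gives $e^{-L|t_0|}\le m(D\varphi_{t_0})\le\|D\varphi_{t_0}\|\le e^{L|t_0|}$; and the Lipschitz control of $\|X\|$ along the short geodesic $s\mapsto\exp_x(sv_0)$ and the short orbit $\varphi_{t_0}$ yields $\|X(F_x(p))\|=(1+O(r_0))\|X(x)\|$. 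Since $|t_0|\le r_0\le 1/(10L)$, all of these factors lie in a narrow band about $1$.

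With these estimates in hand, the two image subspaces $\mathbb{R}X(F_x(p))$ and $D\varphi_{t_0}D\exp_x(N_x)$ are within $O(r_0)$ of being orthogonal (they are exactly orthogonal at $p=0$, where $D_0F_x=\mathrm{id}$), and each block acts with gain close to $1$; a routine linear-algebra estimate for near-orthogonal block maps then delivers $m(D_pF_x)>1/3$ and $\|D_pF_x\|<3$ once $r_0$ is small enough, the generous thresholds $1/3$ and $3$ being what fix the admissible range $r_0\le 1/(10L)$. The positivity $\|X(F_x(p))\|\ge(1-O(r_0))\|X(x)\|>0$ shows simultaneously that the image contains no singularity of $X$.

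It remains to upgrade $F_x$ from an immersion to a genuine embedding of the box, uniformly in $x$, and this uniformity is the step I expect to be the main obstacle. Here the rescaling is decisive: expressed through $G_x=\exp_x^{-1}\circ F_x$ on the convex box, the bounds above show $D_pG_x$ is uniformly $O(r_0)$-close to $D_0G_x=\mathrm{id}$, and the standard injectivity lemma for $C^1$ maps on a convex domain whose derivative stays within distance $<1$ of a fixed linear isomorphism then forces $G_x$, hence $F_x$, to be injective; combined with $m(D_pF_x)>1/3>0$ this makes $F_x$ an embedding. The point to watch is that every constant produced above is scale-invariant—the normal radius $r_0\|X(x)\|$ and the flow time $r_0$ are tuned so that the box always sits in the good range of $\exp_x$ and of the Lipschitz estimates—so a single $r_0$ serves all regular points, including those arbitrarily close to $\mathrm{Sing}(X)$, which is exactly where an unrescaled flowbox construction would break down.
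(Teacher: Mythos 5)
The paper does not prove this proposition at all: it is quoted verbatim from [WW, Proposition 2.2], so there is no in-paper argument to compare against. Your reconstruction is correct and follows essentially the standard proof given in that reference---pass to the chart $\exp_x^{-1}$, use the rescaled box size $r_0\|X(x)\|$ together with the local Lipschitz constant $L$ to show the derivative of the chart representative stays uniformly close to the identity (giving the norm, conorm, and nonvanishing-of-$X$ bounds), and then invoke injectivity on a convex domain to upgrade the immersion to an embedding.
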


From the above proposition we can easily see that for any $t_1, t_2\in[-r_0, r_0]$ and any $x\in M\setminus {\rm Sing}(X)$, we have $\varphi_{t_1}(x)\neq \varphi_{t_2}(x)$ when $t_1\neq t_2$.

\begin{lemma}\label{lem2}
Let $x\in M\setminus{\rm Sing}(X)$. Assuming that a continuous curve $\zeta:[0, 1]\to B_{\frac{r_0}{6}\|X(x)\|}(x)$ satisfies $\zeta(0)=x$ and $\zeta(t)\in {\rm Orb}(x)$ for all $t\in[0,1]$, then for any $t\in[0,1]$, there is $\eta\in [-\frac{r_0}{2}, \frac{r_0}{2}]$ such that $\zeta(t)=\varphi_{\eta}(x)$.
\end{lemma}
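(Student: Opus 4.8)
My plan is to use the uniform flowbox of Proposition~\ref{flowbox} at the point $x$ to convert the hypothesis ``$\zeta(t)\in\mathrm{Orb}(x)$'' into a statement about a single continuous, orbit-valued coordinate, and then to run a connectedness argument. Write $W=F_x(U_x(r_0\|X(x)\|))$ for the flowbox image. Since the box $U_x(r_0\|X(x)\|)$ contains the Euclidean ball $T_xM(r_0\|X(x)\|)$ and $m(D_pF_x)>1/3$, the embedding $F_x$ does not contract distances by more than a factor $3$, so its image contains a ball $B_\rho(x)$ with $\rho\ge\frac{r_0}{3}\|X(x)\|$; in particular $B_{\frac{r_0}{6}\|X(x)\|}(x)\subset W$, so the whole curve $\zeta$ lies in $W$. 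On $W$ I define the continuous \emph{normal coordinate} $\Pi\colon W\to N_x$ by letting $\Pi(y)$ be the $N_x$-component of $F_x^{-1}(y)$; thus $\Pi(y)=v$ exactly when $y=\varphi_s(\exp_x(v))$ for some $|s|\le r_0$, and $\Pi(x)=0$.

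The key observation is that the values of $\Pi$ along the orbit are severely restricted. If $y\in\mathrm{Orb}(x)\cap W$ and $\Pi(y)=v$, then $\exp_x(v)=\varphi_{-s}(y)\in\mathrm{Orb}(x)$ meets the central cross-section $\Sigma_x=\exp_x(\{v\in N_x:\|v\|\le r_0\|X(x)\|\})$. By the flowbox construction $X$ is transverse to $\Sigma_x$ throughout $\Sigma_x\cap W$, so the crossing times $\{t\in\mathbb{R}:\varphi_t(x)\in\Sigma_x\}$ form a discrete, hence at most countable, set; consequently the set $\mathcal V=\{\Pi(y):y\in\mathrm{Orb}(x)\cap W\}$ is countable and therefore totally disconnected. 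Now $\Pi\circ\zeta\colon[0,1]\to\mathcal V$ is continuous with connected domain, so its image is a single point, necessarily $\Pi(\zeta(0))=\Pi(x)=0$. Hence $\Pi(\zeta(t))=0$ for every $t$, which means $F_x^{-1}(\zeta(t))=s(t)X(x)$ for some $s(t)$, i.e. $\zeta(t)$ lies on the central plaque $\gamma_0=\{\varphi_s(x):|s|\le r_0\}$. By the injectivity of $s\mapsto\varphi_s(x)$ on $[-r_0,r_0]$ noted after Proposition~\ref{flowbox}, the time $s(t)\in[-r_0,r_0]$ with $\zeta(t)=\varphi_{s(t)}(x)$ is unique.

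It remains to improve the bound $|s(t)|\le r_0$ to $|s(t)|\le r_0/2$. This is a routine distance estimate: since $\zeta(t)\in B_{\frac{r_0}{6}\|X(x)\|}(x)$ we have $d(\varphi_{s(t)}(x),x)<\frac{r_0}{6}\|X(x)\|$, and comparing the flowbox bounds $m(D_pF_x)>1/3$, $\|D_pF_x\|<3$ with the near-isometry of $\exp_x$ (namely $m(D_p\exp_x)>2/3$) shows that a point $\varphi_s(x)$ with $|s|\le r_0$ can enter this ball only when $|s|\le r_0/2$; the constants $\frac16$ and $\frac12$ are chosen precisely so that this holds. Taking $\eta=s(t)$ then gives the claim. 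I expect the only genuine subtlety to be the possibility that the orbit returns close to $x$: a priori $\mathrm{Orb}(x)\cap B_{\frac{r_0}{6}\|X(x)\|}(x)$ may consist of many ``sheets'' (plaques) threading the ball, and one must exclude that the continuous curve $\zeta$ slides off the central plaque onto a nearby returning one. The connectedness-together-with-total-disconnectedness argument above is exactly what closes this gap, and it is there that the transversality provided by the flowbox is essential.
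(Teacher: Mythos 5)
Your proposal is correct and follows essentially the same route as the paper: project via the flowbox onto the normal space $N_x$, observe that the projected orbit values form a countable (hence totally disconnected) set so the continuous image of $[0,1]$ collapses to $\{0_x\}$, and then read off the time parameter on the central plaque. The only difference is cosmetic — you justify countability via transversality of crossings while the paper uses the countably many connected components of $\{t:\varphi_t(x)\in B_{\frac{r_0}{6}\|X(x)\|}(x)\}$, and you spell out the final estimate forcing $|\eta|\le r_0/2$, which the paper asserts without detail.
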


\begin{proof}
Denote by $\pi_x: T_xM\to N_x$ the orthogonal projection from $T_xM$ to $N_x$ for any $x\in M\setminus{\rm Sing}(X)$. Then we can define a differentiable map $P_x=\pi_x\circ F_x^{-1}:F_x({U}_x(r_0\|X(x)\|))\to N_x$. Note that we have $m(D_pF_x)\geq 1/3$ for any $p\in{U}_x(r_0\|X(x)\|)$, hence $B_{\frac{r_0}{6}\|X(x)\|}(x)\subset F_x({U}_x(r_0\|X(x)\|))\to N_x$. So we can get a continuous curve $P_x\circ\zeta$ in $N_x$. On the other hand, we know that
$$\{t\in\mathbb{R}: \varphi_t(x)\in B_{\frac{r_0}{6}\|X(x)\|}(x)\}$$
is an open set in $\mathbb{R}$ and then a union of countable open intervals $\{I_j\}$, and each $P_x(\varphi_{I_j}(x))$ is a single point in $N_x$. Hence we can see that the image of $P_x\circ\zeta$ is at mostly countable, hence the image of $P_x\circ\xi$ should be $\{0_x\}$. Then we know that $\zeta(t)\in P_x^{-1}(0_x)\subset\varphi_{[-\frac{r_0}{2}, \frac{r_0}{2}]}(x)$. This proves the lemma.
\end{proof}

\begin{lemma}\label{lem3}
Let $X\in\mathcal{X}^1(M)$ and $\varphi_t$ be the flow generated by $X$ and $\Lambda$ be a compact invariant set of $\varphi_t|_{\Lambda}$. For any $Y\in  \mathcal{Z}^1(\varphi_t|_{\Lambda})$ and any hyperbolic singularity $\sigma\in\Lambda$ of $X$, we have $\sigma\in {\rm Sing}(Y)$.
\end{lemma}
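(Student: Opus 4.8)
The plan is to prove that the $\psi$-orbit of $\sigma$ is the single point $\sigma$, where $\psi_s$ denotes the flow generated by $Y$; since $Y(\sigma)=\frac{d}{ds}\psi_s(\sigma)|_{s=0}$, this immediately yields $Y(\sigma)=0$, i.e. $\sigma\in{\rm Sing}(Y)$.

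First I would use the commutativity hypothesis to show that the whole $\psi$-orbit of $\sigma$ consists of singularities of $X$. Since $\sigma$ is a singularity of $X$ we have $\varphi_t(\sigma)=\sigma$ for every $t$. Because $\sigma\in\Lambda$ and $Y\in\mathcal{Z}^1(\varphi_t|_{\Lambda})$, the identity $\psi_s\circ\varphi_t|_\Lambda=\varphi_t\circ\psi_s|_\Lambda$ applied at the point $\sigma$ gives
$$\psi_s(\sigma)=\psi_s(\varphi_t(\sigma))=\varphi_t(\psi_s(\sigma))$$
for all $s,t\in\mathbb{R}$. Thus $\psi_s(\sigma)$ is fixed by $\varphi_t$ for every $t$, i.e. $\psi_s(\sigma)\in{\rm Sing}(X)$ for every $s$.

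Next I would exploit hyperbolicity. A hyperbolic singularity $\sigma$ has $D_\sigma X$ invertible, so $\sigma$ is an isolated zero of $X$; hence there is a neighborhood $U$ of $\sigma$ in $M$ with $U\cap{\rm Sing}(X)=\{\sigma\}$. The curve $s\mapsto\psi_s(\sigma)$ is continuous and equals $\sigma$ at $s=0$, so it stays in $U$ for $|s|$ small; but by the previous step it takes values in ${\rm Sing}(X)$, hence in $U\cap{\rm Sing}(X)=\{\sigma\}$. Therefore $\psi_s(\sigma)=\sigma$ for all sufficiently small $s$. Finally, the set $\{s\in\mathbb{R}:\psi_s(\sigma)=\sigma\}$ is a subgroup of $(\mathbb{R},+)$ containing an interval around $0$, so it is all of $\mathbb{R}$; equivalently $\sigma$ is a fixed point of $\psi$, which is exactly $Y(\sigma)=0$.

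The argument is short, and its only real content is the transport-of-singularities observation in the second step together with the isolation of $\sigma$. The main (mild) subtlety to keep track of is that the isolation must be taken in ${\rm Sing}(X)$ inside all of $M$ (which hyperbolicity guarantees, via invertibility of $D_\sigma X$) rather than merely within $\Lambda$, and that the local conclusion $\psi_s(\sigma)=\sigma$ for small $s$ be promoted to all $s\in\mathbb{R}$ through the subgroup argument.
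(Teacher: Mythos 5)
Your proposal is correct and follows essentially the same route as the paper: commutativity forces $\psi_s(\sigma)$ to be a fixed point of $\varphi_t$ for every $s$, and hyperbolicity makes $\sigma$ an isolated singularity of $X$, forcing $\psi_s(\sigma)=\sigma$. The only difference is that you spell out the local-to-global step (the subgroup argument) that the paper leaves implicit; this is a welcome but not essential elaboration.
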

\begin{proof}
Let $Y\in  \mathcal{Z}^1(\varphi_t|_{\Lambda})$ and $\psi_t$ be the flow generated by $Y$. Since
$$\psi_s(\sigma)=\psi_s(\varphi_t(\sigma))=\varphi_t(\psi_s(\sigma))$$
for all $t\in\mathbb{R}$, hence $\psi_s(\sigma)$ is a fixed point of $\varphi_t$, then we know that $\psi_s(\sigma)=\sigma$ for all $s\in\mathbb{R}$ by the fact that $\sigma$ is an isolated fixed point of $\varphi_t$.
\end{proof}

\begin{lemma}\label{lem4}
Let $X\in\mathcal{X}^1(M)$ and $\varphi_t$ be the flow generated by $X$ and $\Lambda$ be a compact invariant set of $\varphi_t|_{\Lambda}$. Assume that every $\sigma\in{\rm Sing}(X)\cap \Lambda$ is hyperbolic, then there is a neighborhood $U$ of $\Lambda$ and a $C^1$ neighborhood $\mathcal{U}\subset\mathcal{X}^1(M)$ of $X$ and a constant $C>0$, such that for any $Y\in\mathcal{U}$ and any $z\in U$, one has
$$C^{-1}d(z, {\rm Sing}(Y|_U))\leq\|Y(z)\|\leq Cd(z, {\rm Sing}(Y|_U)).$$
\end{lemma}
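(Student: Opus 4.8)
The plan is to split a small neighborhood of $\Lambda$ into a ``singular part'' formed by small balls around the singularities, where the two-sided bound comes from the invertibility of the linearization, and a ``regular part'' on which $\|Y\|$ and the distance to $\mathrm{Sing}(Y|_U)$ are each trapped between two positive constants. Since every $\sigma\in\mathrm{Sing}(X)\cap\Lambda$ is hyperbolic, it is an isolated zero of $X$, and compactness of $\Lambda$ forces $\mathrm{Sing}(X)\cap\Lambda=\{\sigma_1,\dots,\sigma_k\}$ to be finite (the case $k=0$ is the regular part alone and is dealt with at the end). For each $i$ the map $A_i:=DX(\sigma_i)$ is hyperbolic, hence invertible, say with $\|A_i^{-1}\|\le K_0$. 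The key observation is that one can anchor all estimates to the \emph{fixed} invertible maps $A_i$ rather than to derivatives of $Y$: for $w$ in a small ball $B(\sigma_i,\rho)$ and $Y$ in a small $C^1$-neighborhood $\mathcal U\subset\mathcal U_0$ of $X$, one has $\|DY(w)-A_i\|\le\|DY(w)-DX(w)\|+\|DX(w)-A_i\|$, where the first term is small by $C^1$-closeness and the second by continuity of the single map $DX$ on the small ball; thus $\|DY(w)-A_i\|<\frac1{2K_0}$ uniformly in $Y$ and $w$. In particular $DY(w)$ is invertible on $B(\sigma_i,\rho)$ with $\|DY(w)^{-1}\|\le 2K_0$ and $\|DY(w)\|\le\|A_i\|+1$, and, shrinking $\rho$ and $\mathcal U$ if needed, each $Y\in\mathcal U$ has exactly one singularity $\sigma_i^Y\in B(\sigma_i,\rho)$ (persistence and uniqueness of nondegenerate zeros, since $Y$ is a local diffeomorphism there).

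Next I would prove the local estimate. Working in the chart $\exp_{\sigma_i^Y}$ (whose distortion is controlled by the constant $a$ fixed at the start), the identity $Y(z)=\int_0^1 DY\big(\sigma_i^Y+s(z-\sigma_i^Y)\big)(z-\sigma_i^Y)\,ds$ together with the uniform bound $\|DY(w)-A_i\|<\frac1{2K_0}$ shows that $Y(z)=A_i(z-\sigma_i^Y)+R$ with $\|R\|\le\frac1{2K_0}\,d(z,\sigma_i^Y)$. Since $\|A_i(z-\sigma_i^Y)\|$ lies between $\frac1{K_0}d(z,\sigma_i^Y)$ and $\|A_i\|\,d(z,\sigma_i^Y)$, this shows that $\|Y(z)\|$ is comparable to $d(z,\sigma_i^Y)$ with constants depending only on $K_0$ and $\|A_i\|$, uniformly over $z\in B(\sigma_i,\rho)$ and $Y\in\mathcal U$. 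Shrinking $\rho$ so that the balls $B(\sigma_i,\rho)$ are pairwise disjoint and separated by a definite distance, I can also guarantee that for $z\in B(\sigma_i,\rho/2)$ the nearest point of $\mathrm{Sing}(Y|_U)$ is $\sigma_i^Y$ itself, so that $d(z,\sigma_i^Y)=d(z,\mathrm{Sing}(Y|_U))$ there. This yields the two-sided inequality with a constant $C_1$ on the inner balls.

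For the regular part, the compact set $\Lambda\setminus\bigcup_i B(\sigma_i,\rho/2)$ contains no singularity of $X$, so $\|X\|\ge c_0>0$ on it; choosing a neighborhood $U$ of $\Lambda$ (with $U\supset\bigcup_i B(\sigma_i,\rho)$) and shrinking $\mathcal U$, I get $\|Y(z)\|\ge c_0/2$ for every $z\in U\setminus\bigcup_i B(\sigma_i,\rho/2)$ and every $Y\in\mathcal U$. Hence $Y$ has no singularity off the inner balls, so $\mathrm{Sing}(Y|_U)=\{\sigma_1^Y,\dots,\sigma_k^Y\}$; consequently, on the regular part $d(z,\mathrm{Sing}(Y|_U))$ is bounded below by the distance from $U\setminus\bigcup_i B(\sigma_i,\rho/2)$ to the inner balls and above by $\mathrm{diam}(U)$, while $\|Y(z)\|$ is trapped between $c_0/2$ and $\sup_{Y\in\mathcal U}\|Y\|_\infty$. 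Thus the two-sided bound holds with some constant $C_2$ on the regular part, and $C=\max\{C_1,C_2\}$ works everywhere. When $k=0$, after shrinking $U$ and $\mathcal U$ one has $\|Y\|\ge c_0/2$ on all of $U$ and $\mathrm{Sing}(Y|_U)=\emptyset$, so one simply restricts to the nontrivial case $k\ge1$ that occurs in the applications.

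The point that requires the most care is the \emph{uniformity in $Y$}: a single radius $\rho$ and a single neighborhood $\mathcal U$ must work for all $Y$ simultaneously. The naive approach of comparing $DY(w)$ to $DY(\sigma_i^Y)$ fails, because $C^1$-closeness of $Y$ to $X$ does not control the modulus of continuity of $DY$ uniformly over the family. This is precisely why I anchor every estimate to the fixed invertible maps $A_i=DX(\sigma_i)$, using only $\sup_M\|DY-DX\|$ (from $C^1$-closeness) and the continuity of the single field $DX$; hyperbolicity enters exactly through the invertibility of $A_i$, which is stable under small perturbations, and compactness of $M$ and of $\Lambda$ makes all the constants uniform. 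Once this uniform local picture is in place, identifying the nearest singularity in the transition region and matching the two constants is routine.
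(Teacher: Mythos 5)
Your proposal is correct and follows essentially the same route as the paper: finiteness of ${\rm Sing}(X)\cap\Lambda$ from hyperbolicity, a two\--sided linear estimate near each singularity obtained from the invertibility of the linearization in exponential charts via a mean value argument, and trivial upper/lower bounds on the compact regular part, with $C$ taken as the worse of the two constants. The only (minor) difference is that you anchor the local estimate to the fixed maps $A_i=DX(\sigma_i)$ with an explicit integral remainder, whereas the paper instead imposes uniform bounds on $\|(D_v\bar Y_i)^{\pm1}\|$ over the neighborhood and invokes the generalized mean value theorem; your variant makes the uniformity in $Y$ slightly more transparent but is not a genuinely different argument.
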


\begin{proof}
Since $X$ is $C^1$ vector and the singularities of $X$ in $\Lambda$ are all hyperbolic, by the fact that every hyperbolic singularity is isolated, one can find a neighborhood $U$ of $\Lambda$ such that there exist only finite elements in ${\rm Sing}(X|_{\bar{U}})={\rm Sing}(X)\cap\Lambda$. Let ${\rm Sing}(X)\cap\Lambda=\{\sigma_1, \sigma_2, \cdots, \sigma_k\}$. Let $a>0$ be the constant such that $$m(D_p\exp_x)>2/3, ~ \|D_p\exp_x\|<3/2$$ for any $p\in T_xM(a)$ and $x\in M$. For any $\sigma_i (i=1,\cdots, k)$, denote by
$$\bar{X}_i=(\exp^{-1}_{\sigma_i})_*(X|_{B_a(\sigma_i)}).$$
Note that $\sigma_i$ is hyperbolic, hence $D_0X_i$ is invertible for every $i=1,\cdots, k$. Let
$$C_0=\max\{\max\{\|(D_0\bar{X}_i)^{-1}\|, \|D_0\bar{X}_i\|\}: i=1,\cdots, k\}.$$
We can take a neighborhood $U_i\subset B_a(\sigma_i)\cap U$ of $\sigma_i$ such that the vector filed $\bar{X}_i$ on $\exp^{-1}_{\sigma_i}(U_i)$ satisfies $$\|(D_v\bar{X}_i)^{-1}\|<2C_0, \ \ \ \|D_v\bar{X}_i\|<2C_0$$ for any $v\in\exp^{-1}_{\sigma_i}(U_i)$. Without loss of generality, we can also assume that the diameter of $U_i$ is less than the distance between $U_i, U_j$ for any $i\neq j\in\{1,\cdots,k\}$. Let $$m=\min\{\|X(x)\|:x\in \bar{U}\setminus(\bigcup\limits_{i=1}^kU_i)\},\ \  K=\max\{\|X(x)\|: x\in M\}$$ and $\rho=\min\{d(\sigma_i, M\setminus U_i): i=1, \cdots, k\}$. Then we can take a neighborhood $\mathcal{U}$ of $X$ such that for any $Y\in\mathcal{U}$, one has:
\begin{enumerate}
\item $\|(D_v\bar{Y}_i)^{-1}\|<4C_0,  \|D_v\bar{Y}_i\|<4C_0$ for any $v\in\exp^{-1}_{\sigma_i}(U_i)$, where $$\bar{Y}_i=(\exp^{-1}_{\sigma_i})_*(Y|_{B_a(\sigma_i)})$$ for $i=1,2,\cdots, k$;
\item ${\rm Sing}(Y|_U)=\{\sigma_i^Y: i=1,\cdots, k\}$, where $\sigma_i^Y\in U_i$ is the continuation of $\sigma_i$ with respect to $Y$;
\item $\min\{\|Y(x)\|:x\in \bar{U}\setminus(\bigcup\limits_{i=1}^kU_i)\}>\frac{m}{2}, \max\{\|Y(x)\|: x\in M\}<2K$ and $\min\{d(\sigma^Y_i, M\setminus U_i): i=1, \cdots, k\}>\frac{\rho}{2}$;
\end{enumerate}

Set $C=\max\{9C_0, \frac{4K}{\rho}, \frac{2{\rm diam}(M)}{m}\}$. Then for any $z\in U$, if $z\in U\setminus(\bigcup\limits_{i=1}^kU_i)$, then we have
$$\|Y(z)\|\leq 2K\leq C\cdot\frac{\rho}{2}<Cd(z, {\rm Sing}(Y|_U)),$$
$$\|Y(z)\|>\frac{m}{2}\geq C^{-1}{\rm diam}(M)\geq C^{-1}d(z, {\rm Sing}(Y|_U));$$
if $z\in U_i$ for some $i=1,\cdots, k$, on the one hand
$$\|Y(z)\|=\|(\exp_{\sigma_i})_*(\bar{Y}_i(\exp^{-1}_{\sigma_i}(z)))\|\leq \frac{3}{2}\|\bar{Y}_i(\exp^{-1}_{\sigma_i}(z))\|$$$$\leq \frac{3}{2}\cdot 4C_0\|\exp^{-1}_{\sigma_i}(z)-\exp^{-1}_{\sigma_i}(\sigma_i^Y)\|$$$$\leq\frac{3}{2}\cdot 4C_0\cdot \frac{3}{2}d(z, \sigma_i^Y)\leq Cd(z, {\rm Sing}(Y|_U)),$$
on the other hand
$$\|Y(z)\|=\|(\exp_{\sigma_i})_*(\bar{Y}_i(\exp^{-1}_{\sigma_i}(z)))\|\geq \frac{2}{3}\|\bar{Y}_i(\exp^{-1}_{\sigma_i}(z))\|$$$$\geq \frac{2}{3}\cdot (4C_0)^{-1}\|\exp^{-1}_{\sigma_i}(z)-\exp^{-1}_{\sigma_i}(\sigma_i^Y)\|$$$$\geq\frac{2}{3}\cdot (4C_0)^{-1}\cdot \frac{2}{3}d(z, \sigma_i^Y)\geq C^{-1}d(z, {\rm Sing}(Y|_U))$$
by the generalized mean value theorem. This ends the proof of the lemma.
\end{proof}

\begin{lemma}\label{lem5}
Let $\varphi_t$ be a flow generated by $X\in\mathcal{X}^1(M)$ and $\Lambda$ be a compact invariant set which is rescaling separating with respect to $\varphi_t$. If every singularity in $\Lambda$ is hyperbolic, then there is a $C^1$ neighborhood $\mathcal{U}$ of $X$ and constant $\mu>0$ such that for any flow $\psi_t$ generated by $Y\in \mathcal{Z}^1(\varphi_t|_{\Lambda})\cap\mathcal{U}$, there exist an unique function $z:[-\mu, \mu]\times(\Lambda\setminus{\rm Sing}(X))\to [-\frac{r_0}{2}, \frac{r_0}{2}]$ such that $\psi_s(x)=\varphi_{z(s, x)}(x)$ for any $(s, x)\in
[-\mu, \mu]\times(\Lambda\setminus{\rm Sing}(X))$. Moreover,
\begin{enumerate}
\item $z$ is continuous;
\item $z(t+s, x)=z(t,x)+z(s, \psi_t(x))$ for any $x\in\Lambda\setminus {\rm Sing}(X)$ and $t,s\in[-\mu,\mu]$ with $t+s\in[-\mu,\mu]$;
\item $z(s, \varphi_t(x))=z(s, x)$ for any $x\in\Lambda\setminus {\rm Sing}(X)$ and $s\in[-\mu,\mu]$ and $t\in\mathbb{R}$;
\item $z(s, x)=A(x)s$ for any $x\in\Lambda\setminus{\rm Sing}(X)$ and $s\in[-\mu, \mu]$, where $$A(x)=\mu^{-1}z(\mu, x).$$
\end{enumerate}
\end{lemma}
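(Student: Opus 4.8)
The plan is to first show that, for $|s|$ small, the point $\psi_s(x)$ lies on the $\varphi$-orbit of $x$, and then to read off the four claimed properties of the resulting time-reparametrisation. Fix the rescaling separating constant $\varepsilon_0>0$ from the definition, let $C>0$, $U$ and $\mathcal U$ be as in Lemma \ref{lem4}, and shrink $\mathcal U$ so that $\mathcal U\subset\mathcal U_0$. The key preliminary observation is that on $U$ the singularities of $Y$ coincide with those of $X$: by Lemma \ref{lem3} each $\sigma_i\in{\rm Sing}(X)\cap\Lambda$ is a singularity of $Y$, while by Lemma \ref{lem4}(2) the only singularities of $Y$ in $U$ are the continuations $\sigma_i^Y\in U_i$, forcing $\sigma_i^Y=\sigma_i$. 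Hence $d(z,{\rm Sing}(Y|_U))=d(z,{\rm Sing}(X|_U))$ for all $z\in U$, and applying Lemma \ref{lem4} to both $X$ and $Y$ (note $X\in\mathcal U$) gives $\|Y(w)\|\le C^2\|X(w)\|$ for every $w\in U$.

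Next I would fix $\delta>0$ small enough that $\delta C^2\le\min\{\varepsilon_0,\tfrac{r_0}{6}\}$ and let $\mu>0$ be the constant furnished by Lemma \ref{lem1} for this $\delta$, so that $d(\psi_s(w),w)\le\delta\|Y(w)\|$ for all $|s|\le\mu$ and $w\in M$. For $x\in\Lambda\setminus{\rm Sing}(X)$ and $|s|\le\mu$, both $x$ and $\psi_s(x)$ lie in $\Lambda$, and $\varphi_t(x)\in\Lambda\subset U$ for every $t$; using that $Y\in\mathcal Z^1(\varphi_t|_\Lambda)$ commutes with $\varphi_t$, for all $t\in\mathbb R$ one has
\[
d(\varphi_t(x),\varphi_t(\psi_s(x)))=d(\varphi_t(x),\psi_s(\varphi_t(x)))\le\delta\|Y(\varphi_t(x))\|\le\delta C^2\|X(\varphi_t(x))\|\le\varepsilon_0\|X(\varphi_t(x))\|.
\]
By the rescaling separating property this forces $\psi_s(x)\in{\rm Orb}(x)$. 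This is the main obstacle of the whole argument and is exactly where the rescaling is indispensable: the bound shrinks proportionally to $\|X(\varphi_t(x))\|$ as $\varphi_t(x)$ approaches a singularity, which ordinary separating cannot provide.

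Having placed $\psi_s(x)$ on ${\rm Orb}(x)$, the choice $\delta C^2\le\tfrac{r_0}{6}$ also guarantees $\psi_s(x)\in B_{\frac{r_0}{6}\|X(x)\|}(x)$ for all $|s|\le\mu$, so Lemma \ref{lem2} applies to the curve $s\mapsto\psi_s(x)$ and yields $\psi_s(x)=\varphi_{z(s,x)}(x)$ with $z(s,x)\in[-\tfrac{r_0}{2},\tfrac{r_0}{2}]$. Uniqueness of $z(s,x)$ in this range is immediate from the injectivity of $\eta\mapsto\varphi_\eta(x)$ on $[-r_0,r_0]$ noted after Proposition \ref{flowbox}. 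Continuity (item (1)) follows by a compactness argument: if $(s_n,x_n)\to(s,x)$, then any limit point $\eta^*$ of the bounded sequence $z(s_n,x_n)\in[-\tfrac{r_0}{2},\tfrac{r_0}{2}]$ satisfies $\varphi_{\eta^*}(x)=\psi_s(x)=\varphi_{z(s,x)}(x)$, whence $\eta^*=z(s,x)$ by the same injectivity, so the whole sequence converges to $z(s,x)$.

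Finally I would derive the algebraic identities, repeatedly invoking injectivity of $\eta\mapsto\varphi_\eta(x)$ on $[-r_0,r_0]$. For (2), writing $\psi_{t+s}(x)=\psi_s(\psi_t(x))$ and expanding both sides through $\varphi$ gives $\varphi_{z(t+s,x)}(x)=\varphi_{z(t,x)+z(s,\psi_t(x))}(x)$; since both exponents lie in $[-r_0,r_0]$, this yields $z(t+s,x)=z(t,x)+z(s,\psi_t(x))$. For (3), the commutation $\psi_s(\varphi_t(x))=\varphi_t(\psi_s(x))$ expands to $\varphi_{z(s,\varphi_t(x))}(\varphi_t(x))=\varphi_{z(s,x)}(\varphi_t(x))$, and injectivity at the regular point $\varphi_t(x)$ gives $z(s,\varphi_t(x))=z(s,x)$. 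To obtain (4), observe that $\psi_t(x)=\varphi_{z(t,x)}(x)\in{\rm Orb}(x)$, so (3) gives $z(s,\psi_t(x))=z(s,x)$; substituting this into (2) reduces it to the Cauchy equation $z(t+s,x)=z(t,x)+z(s,x)$ for fixed $x$. Combined with the continuity from (1), this forces $z(s,x)=A(x)s$ on $[-\mu,\mu]$, and evaluating at $s=\mu$ identifies $A(x)=\mu^{-1}z(\mu,x)$.
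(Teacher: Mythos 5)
Your proof is correct and follows essentially the same route as the paper: Lemma \ref{lem4} plus Lemma \ref{lem3} give $\|Y\|\le C^2\|X\|$ near $\Lambda$, Lemma \ref{lem1} and the commutation relation feed the rescaling separating property to place $\psi_s(x)$ on ${\rm Orb}(x)$, and Lemma \ref{lem2} with the flowbox injectivity produces $z$ and its properties. The only differences are cosmetic streamlinings: you prove item (3) by applying injectivity directly at the regular point $\varphi_t(x)$ for all $t$ (the paper first treats $|t|\le r_0/2$ and then iterates), and you invoke the standard Cauchy functional equation fact where the paper writes out the rational-multiples argument.
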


\begin{proof}
By Lemma \ref{lem4}, we can fix a neighborhood $\mathcal{U}\subset \mathcal{U}_0$ of $X$ and neighborhood $U$ of $\Lambda $ with constant $C$ such that for any $Y\in\mathcal{U}$, one has
$$C^{-1}d(z, {\rm Sing}(Y|_U))\leq\|Y(z)\|\leq Cd(z, {\rm Sing}(Y|_U)),\ \ \ \ \forall z\in U.$$
By shrinking $\mathcal{U}$ and $U$ if necessary, since every singularity $\sigma\in{\rm Sing}(X)\cap\Lambda$ is hyperbolic, we can assume that the cardinal number of ${\rm Sing}(Y|_U)$ is equal to the cardinal number of  ${\rm Sing}(X|_{U})={\rm Sing}(X)\cap\Lambda$. Then by Lemma \ref{lem3} we know that ${\rm Sing}(Y|_U)={\rm Sing}(X|_U)$ for any $Y\in \mathcal{Z}^r(\varphi_t|_{\Lambda})\cap\mathcal{U}$. Thus we have
$$\|Y(z)\|\leq C d(z, {\rm Sing}(Y|_U))=C d(z, {\rm Sing}(X|_U))\leq C^2\|X(z)\|$$
for any $z\in U$. Note here we are considering that $\Lambda\cap{\rm Sing}(X)$ is not empty, if $\Lambda\cap{\rm Sing}(X)=\emptyset$, we can also easily find $C, U$ and $\mathcal{U}$ such that $$\|Y(z)\|\leq C^2\|X(z)\|$$
holds for any $z\in U$ and $Y\in\mathcal{U}$.

Let $\delta>0$ be the constants in the definition of rescaling separating property. Without loss of generality we can assume that $\delta<\frac{r_0}{6}$. By Lemma \ref{lem1}, there is $\mu>0$ such that for any flow $\psi_t$ generated by a vector field $Y\in\mathcal{U}$ and any $|t|\leq \mu$, one has $d(\psi_t(x), x)\leq C^{-2}\delta\|Y(x)\|$ for any $x\in M$.

Now let $\psi_t$ be a flow generated by a vector field $Y\in \mathcal{Z}^1(\varphi_t|_{\Lambda})\cap\mathcal{U}$. By the fact $\psi_s\circ\varphi_t|_{\Lambda}=\varphi_t\circ\psi_s|_{\Lambda}$ for any $t,s\in\mathbb{R}$, we know that
$$d(\varphi_t(x), \varphi_t(\psi_s(x)))=d(\varphi_t(x), \psi_s(\varphi_t(x)))\leq C^{-2}\delta\|Y(\varphi_t(x))\|\leq \delta\|X(\varphi_t(x))\|$$
for all $x\in\Lambda$ and $t\in\mathbb{R}$ and $|s|\leq \mu$.

Fix $x\in\Lambda\setminus {\rm Sing}(X)$. By the rescaling separating property of $\Lambda$, we know that $\psi_s(x)\in{\rm Orb}(x)$ for all $s\in[-\mu,\mu]$. Note that $\psi_s(x)\in B_{\delta\|X(x)\|}(x)\subset B_{\frac{r_0}{6}\|X(x)\|}(x)$ for all $s\in[-\mu, \mu]$ and $\psi_0(x)=x$, by Lemma \ref{lem2} we know that there is $\eta=z(s, x)\in[-\frac{r_0}{2}, \frac{r_0}{2}]$ such that
$$\psi_s(x)=\varphi_{\eta}(x)$$ for all $s\in[-\mu,\mu]$. By Proposition \ref{flowbox} we know that for any $t_1, t_2\in[-\frac{r_0}{2}, \frac{r_0}{2}]\subset[-r_0, r_0]$, when $t_1\neq t_2$ we have $\varphi_{t_1}(x)\neq\varphi_{t_2}(x)$ for any $x\in M\setminus {\rm Sing}(X)$. Hence $\eta=z(s, x)$ is uniquely defined on $(s, x)\in[-\mu,\mu]\times(\Lambda\setminus {\rm Sing}(X))$. This gives the function $$z:[-\mu,\mu]\times(\Lambda\setminus {\rm Sing}(X))\to[-\frac{r_0}{2}, \frac{r_0}{2}].$$
$$(s,x)\mapsto\eta=z(s,x)$$
If $\eta=z(s,x)$ is not continuous, then one can find a sequence of $\{(s_n, x_n)\}$ in $[-\mu,\mu]\times(\Lambda\setminus {\rm Sing}(X))$ with $(s_n, x_n)\to (s_0, x_0)\in[-\mu,\mu]\times(\Lambda\setminus {\rm Sing}(X))$ as $n\to\infty$ such that $$|z(s_n, x_n)-z(s_0, x_0)|\not\rightarrow 0.$$ By choosing a subsequence we can assume that $z(s_n, x_n)-z(s_0, x_0)\to \eta_0\in[-r_0, r_0]$. Since $\eta_0\neq 0$, we have
$$\lim\limits_{n\to\infty}d(\varphi_{z(s_n, x_n)}(x_0), \varphi_{z(s_0, x_0)}(x_0))=d(\varphi_{z(s_0, x_0)+\eta_0}(x_0), \varphi_{z(s_0, x_0)}(x_0))\neq 0.$$
On the other hand we have
$$d(\varphi_{z(s_n, x_n)}(x_0), \varphi_{z(s_0, x_0)}(x_0))$$$$\leq d(\varphi_{z(s_n, x_n)}(x_n), \varphi_{z(s_n, x_n)}(x_0))+d(\varphi_{z(s_n, x_n)}(x_n), \varphi_{z(s_0, x_0)}(x_0))$$
$$=d(\varphi_{z(s_n, x_n)}(x_n), \varphi_{z(s_n, x_n)}(x_0))+d(\psi_{s_n}(x_n), \psi_{s_0}(x_0))$$
Since $|z(s_n, x_n)|$ is bounded and $d(x_n, x_0)\to 0$ we know $$d(\varphi_{z(s_n, x_n)}(x_n), \varphi_{z(s_n, x_n)}(x_0))\to0.$$ Since $s_n\to s_0$ and $x_n\to x_0$ we have $$d(\psi_{s_n}(x_n), \psi_{s_0}(x_0))\to 0,$$ thus we have
$$\lim\limits_{n\to\infty}d(\varphi_{z(s_n, x_n)}(x_0), \varphi_{z(s_0, x_0)}(x_0))=0,$$ a contradiction. This proves that $\eta=z(s,x)$ is continuous on $[-\mu,\mu]\times(\Lambda\setminus {\rm Sing}(X))$.

Note that
$$\varphi_{z(t+s,x)}(x)=\psi_{t+s}(x)=\psi_s(\psi_t(x))$$$$=\varphi_{z(s, \psi_t(x))}(\psi_t(x))=\varphi_{z(s, \psi_t(x))}(\varphi_{z(t,x)}(x))$$$$=\varphi_{z(t,x)+z(s,\psi_t(x))}(x),$$
and $z(t,x)+z(s,\psi_t(x))\in[-r_0, r_0]$, then we can see that $$z(t+s,x)=z(t,x)+z(s,\psi_t(x))$$ for any $x\in\Lambda\setminus{\rm Sing}(X)$ and $t,s\in[-\mu,\mu]$ with $t+s\in[-\mu,\mu]$. This proves item (2) of the lemma.

Fix $s\in[-\mu,\mu]$ and $t\in[-\frac{r_0}{2}, \frac{r_0}{2}]$ and $x\in\Lambda\setminus {\rm Sing}(X)$. Note that
$$\varphi_{t+z(s, \varphi_t(x))}(x)=\varphi_{z(s, \varphi_t(x))}(\varphi_t(x))$$$$=\psi_s(\varphi_t(x))=\varphi_t(\psi_s(x))$$$$=\varphi_t(\varphi_{z(s,x)}(x))=\varphi_{t+z(s, x)}(x).$$
By the fact that $|t+z(s, \varphi_t(x))|\leq r_0$ and $|t+z(s, x)|\leq r_0$ we know that $$t+z(s, \varphi_t(x))=t+z(s, x)$$ and then $z(s, \varphi_t(x))=z(s,x)$. Let $s\in[-\mu,\mu]$ and $t\in\mathbb{R}$ and $x\in\Lambda\setminus {\rm Sing}(X)$ be given. We can find $n\in\mathbb{N}$ big enough such that $|n^{-1}t|\leq\frac{r_0}{2}$, then we can see that
$$z(s, \varphi_t(x))=z(s, \varphi_{\frac{n-1}{n}t}(x))=\cdots=z(s, \varphi_{\frac{1}{n}t}(x))=z(s,x).$$
This proves item (3) of the lemma.

From item (2) and (3) we can see that for any $s, t\in[-\mu,\mu]$ and any $x\in\Lambda\setminus{\rm Sing}(X)$, we have
$$z(s+t, x)=z(t, x)+z(s, \psi_t(x))=z(t, x)+z(s, \varphi_{z(t, x)}(x))=z(t, x)+z(s, x).$$
Fix $x\in\Lambda\setminus{\rm Sing}(X)$. For any $n\in\mathbb{Z}^+$, we have $nz(n^{-1}\mu, x)=z(\mu, x)$, and then we have $$z(n^{-1}\mu, x)=n^{-1}z(\mu, x)=\mu A(x).$$ And then we have $$z(\frac{m}{n}\mu, x)=\frac{m}{n}\mu A(x)$$ for any rational number $\frac{m}{n}\in[0, 1]$. Note that $z(-t, x)=-z(t, x)$, we can see that $$z(\frac{m}{n}\mu, x)=\frac{m}{n}\mu A(x)$$ for any rational number $\frac{m}{n}\in[-1,1]$, by the continuity of $z(s, x)$ we can see that $$z(s, x)=A(x)s$$ for any $s\in[-\mu, \mu]$. This proves item (4).
\end{proof}

\begin{proposition}\label{pro2}
Let $\varphi_t$ be a flow generated by $X\in\mathcal{X}^1(M)$ and $\Lambda$ be a compact invariant set which is rescaling separating with respect to $X$. If every singularity in $\Lambda$ is hyperbolic, then there is a $C^1$ neighborhood $\mathcal{U}$ of $X$ such that for any flow $\psi_t$ generated by $Y\in \mathcal{Z}^1(\varphi_t|_{\Lambda})\cap\mathcal{U}$, there is a continuous function $A: \Lambda\setminus{\rm Sing}(X)\to \mathbb{R}$ which is constant along the orbit of $\varphi_t$ such that $$\psi_t(x)=\varphi_{A(x)t}(x)$$ for any $x\in\Lambda\setminus {\rm Sing}(X)$ and $t\in\mathbb{R}$.
\end{proposition}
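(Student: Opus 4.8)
The plan is to take the neighborhood $\mathcal{U}$, the constant $\mu>0$, and the function $A$ produced by Lemma \ref{lem5}, and then to upgrade the local identity $\psi_s(x)=\varphi_{A(x)s}(x)$, which Lemma \ref{lem5} supplies only for $|s|\le\mu$, to an identity valid for all $t\in\mathbb{R}$. First I would record the two structural properties of $A$ that Lemma \ref{lem5} already guarantees: continuity of $A(x)=\mu^{-1}z(\mu,x)$ follows from the continuity of $z$ in item (1), and the fact that $A$ is constant along orbits of $\varphi_t$ follows from item (3), since $A(\varphi_t(x))=\mu^{-1}z(\mu,\varphi_t(x))=\mu^{-1}z(\mu,x)=A(x)$ for every $t\in\mathbb{R}$. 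These are precisely the two properties demanded of $A$ in the statement, so the only remaining task is the extension in the time variable.

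The core of the argument is an induction on $N\in\mathbb{Z}^+$ with hypothesis that $\psi_t(x)=\varphi_{A(x)t}(x)$ holds for all $x\in\Lambda\setminus{\rm Sing}(X)$ and all $|t|\le N\mu$. The base case $N=1$ is exactly item (4) of Lemma \ref{lem5} combined with $\psi_s(x)=\varphi_{z(s,x)}(x)$. For the inductive step I would fix $x$ and $t\in[N\mu,(N+1)\mu]$, write $t=N\mu+s$ with $0\le s\le\mu$, and set $y=\psi_{N\mu}(x)$. By the inductive hypothesis $y=\varphi_{A(x)N\mu}(x)$, so $y\in{\rm Orb}(x)$; in particular $y\in\Lambda\setminus{\rm Sing}(X)$ (because $\Lambda$ is $\varphi_t$-invariant and a point on the orbit of a regular point is regular), and $A(y)=A(x)$ by the orbit-invariance of $A$. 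Applying the base case to $y$ and $s$ gives $\psi_s(y)=\varphi_{A(y)s}(y)=\varphi_{A(x)s}(y)$, whence the flow property yields $\psi_t(x)=\psi_s(\psi_{N\mu}(x))=\varphi_{A(x)s}(\varphi_{A(x)N\mu}(x))=\varphi_{A(x)t}(x)$. The negative range $t\in[-(N+1)\mu,-N\mu]$ is handled identically with $N\mu$ replaced by $-N\mu$, so the identity holds on the whole interval $[-(N+1)\mu,(N+1)\mu]$ and the induction closes. Since $\bigcup_{N}[-N\mu,N\mu]=\mathbb{R}$, this establishes $\psi_t(x)=\varphi_{A(x)t}(x)$ for all $t\in\mathbb{R}$ and all $x\in\Lambda\setminus{\rm Sing}(X)$.

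I expect the only delicate point to be verifying that the anchor point $y=\psi_{N\mu}(x)$ genuinely returns to ${\rm Orb}(x)$ and remains regular, since this is what lets the orbit-constant function $A$ be reused with the same value at each stage of the induction; this is guaranteed by the inductive hypothesis together with $\psi_t(\Lambda)=\Lambda$, which holds by the definition of $\mathcal{Z}^1(\varphi_t|_\Lambda)$. Everything else is bookkeeping with the semigroup property $\psi_{t+s}=\psi_t\circ\psi_s$, so no new estimate and no further appeal to the rescaling separating hypothesis is needed beyond what Lemma \ref{lem5} has already absorbed.
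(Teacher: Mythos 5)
Your proposal is correct and follows essentially the same route as the paper: both take $\mathcal{U}$, $\mu$, and $A(x)=\mu^{-1}z(\mu,x)$ from Lemma \ref{lem5}, read off continuity and orbit-invariance of $A$ from items (1) and (3), and then extend the identity $\psi_t(x)=\varphi_{A(x)t}(x)$ from $|t|\le\mu$ to all of $\mathbb{R}$ by iterating the local identity, using that $\psi_\tau(x)$ lies on ${\rm Orb}(x)$ so the orbit-constant $A$ keeps the same value at each step. The only cosmetic difference is that the paper subdivides $t$ into $n$ equal pieces $\tau=t/n$ with $|\tau|\le\mu$ while you step by increments of $\mu$ plus a remainder; the inductive mechanism is identical.
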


\begin{proof}
Let $\mathcal{U}$ be given as in Lemma \ref{lem5}. Fix the flow $\psi_t$ generated by a vector field $Y\in\mathcal{Z}^1(\varphi_t|_{\Lambda})\cap\mathcal{U}$. Then we can take $z(s, x)=A(x)s$ as in Lemma \ref{lem5} for any $(s, x)\in [-\mu,\mu]\times(\Lambda\setminus{\rm Sing}(X))$.
By the continuity of $z(s, x)$ we can see that $A(x)$ is continuous on $\Lambda\setminus{\rm Sing}(X)$. Let $x\in\Lambda\setminus {\rm Sing}(X)$ and $t\in\mathbb{R}$ be given, we have
$$A(\varphi_t(x))=\mu^{-1}z(\mu, \varphi_t(x))=\mu^{-1}z(\mu, x)=A(x).$$
Hence $A(x)$ is constant along orbit of $\varphi_t$.

By the fact that $\psi_t(x)=\varphi_{z(t,x)}(x)$ we can easily see that $\psi_t(x)=\varphi_{A(x)t}(x)$ is true for any $x\in\Lambda\setminus{\rm Sing}(X)$ and $t\in[-\mu, \mu]$. Fix any $x\in\Lambda\setminus{\rm Sing}(X)$ and $t\in\mathbb{R}$, we can take $n\in\mathbb{N}$ big enough such that $|n^{-1}t|\leq \mu$. Denote by $\tau=n^{-1}t$, then we have
$$\psi_{2\tau}(x)=\psi_\tau(\psi_\tau(x))=\varphi_{A(\psi_{\tau}(x))\tau}(\varphi_{A(x)\tau}(x))$$$$=\varphi_{A(\varphi_{A(x)\tau}(x))\tau}(\varphi_{A(x)\tau}(x))=\varphi_{2A(x)\tau}(x),$$
$$\psi_{3\tau}(x)=\psi_\tau(\psi_{2\tau}(x))=\varphi_{A(\psi_{2\tau}(x))\tau}(\varphi_{2A(x)\tau}(x))$$$$=\varphi_{A(\varphi_{2A(x)\tau}(x))\tau}(\varphi_{2A(x)\tau}(x))=\varphi_{3A(x)\tau}(x),$$
and then by induction we have $$\psi_t(x)=\psi_{n\tau}(x)=\varphi_{nA(x)\tau}(x)=\varphi_{A(x)t}(x).$$ This ends the proof of Proposition \ref{pro2}.
\end{proof}

\begin{lemma}\label{lem6}
Let $\varphi_t$ be a flow generated by $X\in\mathcal{X}^r(M)$ and $\Lambda$ be a compact invariant set of $\varphi_t$. For any $Y\in \mathcal{Z}^k(\varphi_t|_{\Lambda})$ and $c\in\mathbb{R} (1\leq k\leq r)$,  we have $X+cY\in\mathcal{Z}^k(\varphi_t|_{\Lambda})$.
\end{lemma}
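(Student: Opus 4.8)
The plan is to identify the flow $\chi_t$ generated by $X+cY$ with the composition $\Phi_t:=\varphi_t\circ\psi_{ct}$ on $\Lambda$, where $\psi_t$ is the flow generated by $Y$, and then to read off the two defining properties of $\mathcal{Z}^k(\varphi_t|_{\Lambda})$ directly from this identification. First I would record the elementary facts: since $k\le r$ we have $X+cY\in\mathcal{X}^k(M)$; the map $\psi_{ct}$ is exactly the flow generated by $cY$; and $\psi_{ct}(\Lambda)=\Lambda$, $\varphi_t(\Lambda)=\Lambda$ for all $t$. Consequently $\Phi_t(\Lambda)=\Lambda$ for every $t$.

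The key step is to differentiate the commutation relation. Fixing $x\in\Lambda$ and $t\in\mathbb{R}$, the two curves $s\mapsto\psi_s(\varphi_t(x))$ and $s\mapsto\varphi_t(\psi_s(x))$ are equal for all $s$ (each point $\psi_s(x)$ stays in $\Lambda$, so $\psi_s\circ\varphi_t=\varphi_t\circ\psi_s$ applies), hence their $s$-derivatives at $s=0$ agree, giving the pushforward identity $D_x\varphi_t\cdot Y(x)=Y(\varphi_t(x))$ for every $x\in\Lambda$. Using this together with the chain rule I would compute, for $x\in\Lambda$,
$$\frac{d}{dt}\Phi_t(x)=X(\Phi_t(x))+D_{\psi_{ct}(x)}\varphi_t\cdot\bigl(cY(\psi_{ct}(x))\bigr)=X(\Phi_t(x))+cY(\Phi_t(x)),$$
where in the last equality the pushforward identity is applied at the point $\psi_{ct}(x)\in\Lambda$. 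Thus $t\mapsto\Phi_t(x)$ is an integral curve of $X+cY$ through $x$; since $X+cY$ is $C^1$ and hence locally Lipschitz, uniqueness of integral curves forces $\Phi_t(x)=\chi_t(x)$ for every $x\in\Lambda$ and every $t$.

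With the identification $\chi_t|_{\Lambda}=\varphi_t\circ\psi_{ct}|_{\Lambda}$ in hand, both centralizer conditions follow. Invariance $\chi_t(\Lambda)=\Lambda$ is immediate from $\Phi_t(\Lambda)=\Lambda$. For commutation I would compute, for $x\in\Lambda$,
$$\chi_s(\varphi_t(x))=\varphi_s\bigl(\psi_{cs}(\varphi_t(x))\bigr)=\varphi_s\bigl(\varphi_t(\psi_{cs}(x))\bigr)=\varphi_t\bigl(\varphi_s(\psi_{cs}(x))\bigr)=\varphi_t(\chi_s(x)),$$
using the commutation of $\psi_{cs}$ and $\varphi_t$ on $\Lambda$ (legitimate since $\varphi_t(x),\psi_{cs}(x)\in\Lambda$) and the group law for $\varphi$. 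Hence $X+cY\in\mathcal{Z}^k(\varphi_t|_{\Lambda})$.

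The \emph{main obstacle} to keep in view is that $\varphi_t$ and $\psi_t$ commute only on $\Lambda$, not on all of $M$, so $\Phi_t=\varphi_t\circ\psi_{ct}$ need not be the genuine flow of $X+cY$ away from $\Lambda$, and one cannot simply invoke the standard fact that commuting vector fields have flow $\varphi_t\circ\psi_{ct}$. The invariance $\psi_{ct}(\Lambda)=\varphi_t(\Lambda)=\Lambda$ is precisely what keeps every point occurring in the computations inside $\Lambda$, so that the pushforward identity and the commutation relation stay available; uniqueness of integral curves then upgrades this infinitesimal match into the global identity $\Phi_t|_{\Lambda}=\chi_t|_{\Lambda}$.
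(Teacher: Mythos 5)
Your proposal is correct and follows essentially the same route as the paper: both identify the flow of $X+cY$ on $\Lambda$ with $\varphi_t\circ\psi_{ct}$ and then read off invariance and commutation. The only (minor) difference is in how the generator is identified: the paper verifies the group law $\tilde\psi_{t+s}=\tilde\psi_t\circ\tilde\psi_s$ on $\Lambda$ and differentiates at $t=0$, whereas you differentiate at every $t$ via the pushforward identity $D_x\varphi_t\cdot Y(x)=Y(\varphi_t(x))$ and invoke uniqueness of integral curves --- both are valid ways to close the same argument.
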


\begin{proof}
Let $\psi_t$ be the flow generated by $Y$. Let us consider a flow $\tilde{\psi}_t|_\Lambda=\varphi_t\circ\psi_{ct}|_\Lambda$. Since $\varphi_t, \psi_s$ is commuting on $\Lambda$, we have
$$\tilde\psi_{t+s}(x)=\varphi_{t+s}\circ\psi_{ct+cs}(x)=\varphi_t\circ\varphi_s\circ\psi_{ct}\circ\psi_{cs}(x)=\varphi_t\circ\psi_{ct}\circ\varphi_s\circ\psi_{cs}(x)=\tilde\psi_{t}\circ\tilde\psi_s(x),$$
for any $t, s\in\mathbb{R}, x\in\Lambda$, thus $\tilde\psi_t|_{\Lambda}$ is really a flow on $\Lambda$. For any $x\in\Lambda$ we have
$$\frac{{\rm d}}{{\rm d}t}\tilde\psi_t(x)|_{t=0}=\frac{{\rm d}}{{\rm d}t}\varphi_t(\psi_{ct}(x))|_{t=0}=X(x)+cY(x),$$
hence $\tilde\psi_t|_\Lambda$ is the restriction of the flow generated by $X+cY$ on $\Lambda$. For any $s, t\in\mathbb{R}$, we have
$$\tilde\psi_s\circ\varphi_t|_{\Lambda}=\varphi_s\circ\psi_{cs}\circ\varphi_t|_{\Lambda}=\varphi_s\circ\varphi_t\circ\psi_{cs}|_{\Lambda}=\varphi_t\circ\varphi_s\circ\psi_{cs}|_{\Lambda}=\varphi_t\circ\tilde\psi_s|_{\Lambda},$$
hence $X+cY\in\mathcal{Z}^k(\varphi_t|_{\Lambda})$.
\end{proof}

\noindent{\it Proof of Theorem A.} Let $\varphi_t$ be a flow generated by $X\in\mathcal{X}^1(M)$ and $\Lambda$ be a compact invariant set which is rescaling separating with respect to $\varphi_t$. Assume that every $\sigma\in\Lambda\cap{\rm Sing}(X)$ is hyperbolic. Let $Y\in \mathcal{Z}^1(\varphi_t|_{\Lambda})$. Take the neighborhood $\mathcal{U}$ of $X$ being given as in Proposition \ref{pro2}. Then we can take $c>0$ small enough such that $Y'=X+cY\in\mathcal{U}$. Let $\psi'_t$ be the flow generated by $Y'$. By Proposition \ref{pro2} we know that there exist a continuous function $A':\Lambda\setminus{\rm Sing}(X)\to\mathbb{R}$ such that $\psi'_t(x)=\varphi_{A'(x)t}(x)$ for any $x\in\Lambda\setminus{\rm Sing}(X)$ and $t\in\mathbb{R}$. Then we have
$$Y'(x)=\frac{{\rm d}}{{\rm d}t}\psi'_t(x)|_{t=0}=\frac{{\rm d}}{{\rm d}t}\varphi_{A'(x)t}(x)|_{t=0}=A'(x)X(x)$$
for any $x\in\Lambda\setminus{\rm Sing}(X)$. Then we have $$Y(x)=c^{-1}(Y'(x)-X(x))=c^{-1}(A'(x)-1)X(x)$$ for any $x\in\Lambda\setminus{\rm Sing}(X)$. Take $A(x)=c^{-1}(A'(x)-1)$, we can see that $A(x)$ satisfies the requests of Theorem A. This ends the proof of Theorem A.

\section{Proof of Corollaries}
Fix $1\leq k\leq r$. We say that $X\in\mathcal{X}^r(M)$ has {\rm collinear $C^k$-centralizers} if for any $Y\in\mathcal{Z}^k(X)$, one has
$${\rm dim} \langle X(x), Y(x)\rangle \leq 1$$
for any $x\in M$. The following lemma is Theorem 3.4 of \cite{LOS}.

\begin{lemma}\label{lem8}\cite{LOS}
Let $X\in\mathcal{X}^1(M)$. If $X$ has collinear centralizer and all the singularities of $X$ are hyperbolic, then $X$ has quasi-trivial $C^1$-centralizer.
\end{lemma}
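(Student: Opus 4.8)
The plan is to reduce the whole statement to the analysis of a single scalar function along the flow, and then to fight for its regularity at the singularities. Since $X$ has collinear centralizer, for any $Y\in\mathcal{Z}^1(X)$ and any regular point $x$ the vector $Y(x)$ lies in the line $\mathbb{R}X(x)$, so there is a unique scalar $f(x)$ with $Y(x)=f(x)X(x)$; explicitly $f(x)=\langle Y(x),X(x)\rangle/\|X(x)\|^2$, which is $C^1$ on $M\setminus{\rm Sing}(X)$. Commutativity of the two flows is equivalent to $[X,Y]=0$, and since $[X,fX]=X(f)\,X$ on the regular set, this forces $X(f)\equiv 0$ there; that is, $f$ is a first integral, constant along the orbits of $\varphi_t$. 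Thus on $M\setminus{\rm Sing}(X)$ the function $f$ already witnesses $Y=fX$ with $X(f)=0$, and the entire problem is to extend $f$ to a $C^1$ function across the singularities.

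Next I would treat the continuous extension at a hyperbolic singularity $\sigma$. By Lemma~\ref{lem3} we have $\sigma\in{\rm Sing}(Y)$, so both $X$ and $Y$ vanish at $\sigma$; write $A=D_\sigma X$ (invertible by hyperbolicity) and $B=D_\sigma Y$, and note $[X,Y]=0$ forces $[A,B]=0$. The essential point is that collinearity holds at \emph{every} regular point near $\sigma$: writing $x=\sigma+r\omega$ with $\|\omega\|=1$, the relation $Y(x)=f(x)X(x)$ together with $X(\sigma+r\omega)=rA\omega+o(r)$ and $Y(\sigma+r\omega)=rB\omega+o(r)$ gives $B\omega+o(1)=f(\sigma+r\omega)\,(A\omega+o(1))$. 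Because $\|Y\|\le C\|X\|$ near $\sigma$ the function $f$ is bounded, so cluster values exist, and since $A\omega\neq 0$ any cluster value $c$ along the ray in direction $\omega$ must satisfy $B\omega=cA\omega$, hence is unique and equals $c_\omega=\langle B\omega,A\omega\rangle/\|A\omega\|^2$; thus every unit $\omega$ is an eigenvector of $A^{-1}B$. A linear map all of whose vectors are eigenvectors is scalar, so $A^{-1}B=cI$, i.e. $B=cA$, which also shows $c_\omega=c$ is independent of $\omega$ and that $f(x)\to c$ as $x\to\sigma$ (a direction-chasing argument rules out any other subsequential limit). Setting $f(\sigma)=c$ makes $f$ continuous on $M$, and $X(f)(\sigma)=df_\sigma(X(\sigma))=0$ automatically.

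The main obstacle is the final upgrade from continuity to $C^1$ regularity of $f$ at $\sigma$, which is delicate precisely because $X$ and $Y$ are only $C^1$. If $\sigma$ is a sink or a source the issue dissolves: every nearby orbit accumulates on $\sigma$ in forward or backward time, and since $f$ is constant along orbits and continuous at $\sigma$ it must be locally constant $\equiv c$, hence trivially $C^1$. The genuine difficulty is at saddles, where generic orbits do not accumulate on $\sigma$; there the crude estimate from the identity $\|X\|^2\,\nabla f=(DY-f\,DX)^{\top}X$ only produces a numerator of size $o(|x-\sigma|)$ against a denominator of size $|x-\sigma|^2$ (using $B=cA$), which does not close. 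To get past this I would exploit that $f$ is a first integral and that $V:=Y=fX$ is globally $C^1$ at \emph{all} regular points, in particular on $W^s(\sigma)\cup W^u(\sigma)$ arbitrarily close to $\sigma$: any failure of $f$ to be $C^1$ at $\sigma$ propagates along the invariant manifolds and would contradict the $C^1$-smoothness of $V$ at those nearby regular points, where $X\neq 0$. Once $f\in C^1(M)$ is established, the relation $Y=f\cdot X$ with $X(f)\equiv 0$ exhibits the desired quasi-trivial $C^1$-centralizer, completing the proof.
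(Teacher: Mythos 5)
The paper gives no proof of this lemma at all --- it is imported verbatim as Theorem 3.4 of \cite{LOS} --- so your attempt can only be judged against the statement itself, and there it has a genuine gap exactly at the step you yourself flag as the ``main obstacle''. Your first two paragraphs are sound and follow the standard route: defining $f=\langle Y,X\rangle/\|X\|^{2}$ on the regular set, deriving $X(f)\equiv 0$ from commutativity, and extending $f$ continuously to a hyperbolic singularity $\sigma$ by showing $D_\sigma Y=c\,D_\sigma X$ (every direction being an eigenvector of $A^{-1}B$ forces it to be scalar). This even yields differentiability of $f$ at $\sigma$ with $Df(\sigma)=0$, since $Y-cX=o(d(x,\sigma))$ while $\|X(x)\|\asymp d(x,\sigma)$ by hyperbolicity.

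The gap is the claimed $C^{1}$ upgrade at a saddle. The sentence ``any failure of $f$ to be $C^1$ at $\sigma$ propagates along the invariant manifolds and would contradict the $C^1$-smoothness of $V$ at those nearby regular points'' is not an argument: $V=Y$ is $C^{1}$ at every regular point by hypothesis, and $f=\langle Y,X\rangle/\|X\|^{2}$ is automatically $C^{1}$ there, so there is no contradiction available to extract. What must actually be shown is that $Df(x)\to 0$ as $x\to\sigma$, and your own computation demonstrates that the direct estimate only yields $\|Df(x)\|=o(1)/d(x,\sigma)$, which does not close; nothing in the proposal improves this. (The invariance identity $Df(x)=Df(\varphi_t(x))\circ D\varphi_t(x)$ is the natural tool, but at a saddle $D\varphi_t$ has both expanding and contracting directions, so pushing $x$ to the boundary of a linearizing neighborhood does not obviously control $Df(x)$ either.) As written, your argument establishes that $f$ extends to a function on $M$ that is $C^{1}$ off ${\rm Sing}(X)$, continuous on $M$, $X$-invariant, and differentiable with vanishing derivative at each singularity --- which suffices for every application the paper makes of quasi-triviality, but falls short of the $C^{1}(M)$ regularity demanded by the definition as literally stated here. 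To complete the proof you must either supply the missing continuity of $Df$ at saddle singularities or verify that the notion of quasi-triviality being invoked (as in \cite{LOS}) requires less regularity at ${\rm Sing}(X)$.
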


Now we give the proof of Corollary B.

\bigskip

\noindent{\it Proof of Corollary B.} Since $X\in\mathcal{X}^1(M)$ is rescaling separating and every singularity of $X$ is hyperbolic,
we know that $X$ has collinear centralizer by Theorem A and then by Lemma \ref{lem8}, $X$ has quasi-trivial $C^1$-centralizer. This ends the proof of Corollary B.

\bigskip

The following is Proposition 4.9 of \cite{LOS}.

\begin{lemma}\label{lem10}
Let $\mathbb{T}^2$ denote the two dimensional torus. If $X\in\mathcal{X}^2(\mathbb{T}^2)$ and ${\rm Sing}(X)=\emptyset$,
then $X$ is not kinematic expansive.
\end{lemma}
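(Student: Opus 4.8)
The plan is to reduce the statement to the dynamics of a one-dimensional first-return map and then exhibit, for every $\delta>0$, a pair of distinct orbits that stay $\delta$-close for all time. First I would record that, since ${\rm Sing}(X)=\emptyset$ and $\mathbb{T}^2$ is compact, $\|X\|$ is bounded below by some $c>0$, so $\varphi_t$ has no fixed points and bounded speed. The decisive structural input is Schwartz's theorem, which is exactly where the $C^2$ hypothesis is essential: every minimal set of $\varphi_t$ is either a periodic orbit or all of $\mathbb{T}^2$, ruling out the Denjoy-type exceptional minimal sets that a merely $C^1$ field could carry. Consequently either $\varphi_t$ has a periodic orbit or $\varphi_t$ is minimal, and in both cases the classical theory of nonsingular flows on the torus provides an essential simple closed curve $\Sigma\cong S^1$ transverse to $X$ meeting every orbit; the first-return map $f\colon\Sigma\to\Sigma$ is then a $C^2$ circle diffeomorphism and $\varphi_t$ is conjugate to the suspension of $f$ under a positive continuous return-time $\tau\colon\Sigma\to(0,\infty)$.

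Next I would analyse $f$ through its rotation number. If this number is irrational, Denjoy's theorem (again a $C^2$ phenomenon) gives a homeomorphism $h$ with $f=h\circ R_\rho\circ h^{-1}$ for the rigid rotation $R_\rho$; since $R_\rho$ is an isometry and $h$ is uniformly continuous, any two section points satisfy $d(f^n\theta_x,f^n\theta_y)\le \omega_h\big(d(h^{-1}\theta_x,h^{-1}\theta_y)\big)$ \emph{uniformly} in $n\in\mathbb{Z}$, so the whole $f$-orbits stay as close as we wish once $\theta_x,\theta_y$ are close. If the rotation number is rational, periodic points of $f$ exist and the associated periodic orbits are essential curves; choosing $\theta_x,\theta_y$ inside an arc between two consecutive periodic points, on which a suitable power of $f$ is fixed-point free, the north--south hyperbolic model shows that the gaps $d(f^n\theta_x,f^n\theta_y)$ are summable over $n\in\mathbb{Z}$ and tend to $0$ with $d(\theta_x,\theta_y)$, hence are again uniformly small.

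With the section orbits controlled, I would upgrade closeness on $\Sigma$ to closeness of the genuine flow orbits at \emph{equal} times. Taking $x,y$ to be the points of $\Sigma$ over $\theta_x,\theta_y$ and choosing $\theta_y$ off the $f$-orbit of $\theta_x$ forces $y\notin{\rm Orb}(x)$, hence $y\notin\varphi_{[-\varepsilon,\varepsilon]}(x)$ for every $\varepsilon$. It then remains to bound $d(\varphi_t x,\varphi_t y)$ for all $t$, which reduces to controlling the accumulated return-time difference $D_m=\sum_{k=0}^{m-1}\big(\tau(f^k\theta_x)-\tau(f^k\theta_y)\big)$ together with the fibre coordinate. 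In the rational case this follows at once from the summability above: with $\tau$ Lipschitz, $|D_m|\le L\sum_k d(f^k\theta_x,f^k\theta_y)$ is bounded and small, and the suspension estimate then yields $d(\varphi_t x,\varphi_t y)\le\delta$ for all $t$, so $\varphi_t$ is not separating and, a fortiori, not kinematic expansive.

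\emph{Main obstacle.} The delicate point is precisely this control of the \emph{phase drift} $D_m$ in the irrational case. Kinematic expansiveness compares the two orbits at the same instant $t$, so uniform closeness on the section is not by itself sufficient: one must prevent the return-time cocycle from desynchronising the two orbits along the flow direction, and $D_m$ need not be bounded for an arbitrary choice of nearby $\theta_x,\theta_y$. This is where the $C^2$ hypothesis must do genuine work beyond Schwartz and Denjoy, through the rigidity of the return map $f$ (conjugacy to a rotation, unique ergodicity, and the resulting coboundary-type control of $\tau$ along the two chosen orbits). I expect the heart of the proof to consist in showing that $\theta_x,\theta_y$ can be selected so that $\sup_m|D_m|$ is small, after which the suspension estimate together with the transverse-box geometry closes the argument.
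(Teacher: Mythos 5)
First, a point of reference: the paper does not prove this statement at all --- it is imported verbatim as Proposition 4.9 of \cite{LOS} --- so the only honest comparison is with the argument in that reference. Judged on its own terms, your proposal has two genuine gaps. The first is structural: a nonsingular flow on $\mathbb{T}^2$ need \emph{not} admit an essential closed transversal meeting every orbit. Such a global cross-section exists exactly when the flow has no Reeb components; a smooth nonsingular flow on $\mathbb{T}^2$ containing a Reeb annulus (two boundary periodic orbits traversed in opposite directions, interior orbits running from one to the other) has periodic orbits but is not a suspension, so your reduction to a first-return circle diffeomorphism $f$ with rational rotation number simply is not available in the ``periodic orbit'' branch of your dichotomy. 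The conclusion is still true in that case (two nearby interior orbits of a Reeb annulus stay close at equal times, since their radial coordinates are time-translates of one another and the angular drift converges), but this requires a separate local argument near the periodic orbits that you have not supplied.

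The second gap is the one you name yourself, and it is fatal as written: in the minimal case you do not prove that $\theta_x,\theta_y$ on distinct orbits can be chosen with $\sup_m|D_m|$ small; you only say you ``expect'' this to be the heart of the proof. That expectation cannot be waved through. For a roof function $\tau$ that is not cohomologous to a constant, the differences $S_m\tau(\theta_x)-S_m\tau(\theta_y)$ of Birkhoff sums over an irrational rotation are typically unbounded; unique ergodicity only gives $D_m=o(m)$, and Denjoy--Koksma-type bounds along denominators give boundedness by the variation of $\tau$, not smallness. Moreover Artigue has constructed \emph{continuous} kinematic-expansive minimal flows on $\mathbb{T}^2$, so the phase drift really can desynchronize every pair of orbits in low regularity; whatever mechanism forces the existence of a synchronized pair must use the $C^2$ hypothesis beyond Schwartz and Denjoy, and your proposal does not identify it. (One possible easing you do not exploit: to contradict kinematic expansiveness, as opposed to the separating property, it suffices to take $y=\varphi_T(x)$ on the \emph{same} orbit with $T$ large, which turns $D_m$ into the oscillation of a single Birkhoff sum $S_N\tau$ --- but even then smallness of that oscillation is not automatic.) As it stands, the rational case is incompletely reduced and the irrational case is asserted rather than proved, so the proposal does not establish the lemma.
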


Now we give the proof of Corollary C.

\bigskip

\noindent{\it Proof of Corollary C.} It is easy to see that rescaling kinematic expansiveness implies rescaling separating property. Hence if $X$ is rescaling kinematic expansive and all the singularities of $X$ are hyperbolic, by Corollary B, we know that $X$  has quasi-trivial centralizer. For any $Y\in\mathcal{Z}^3(X)$, we can take a function $f:M\to\mathbb{R}$ be a $C^1$, $X(f)\equiv0$ such that $f|_{M\setminus{\rm Sing}(X)}$ is $C^3$. Next, we will prove that $f$ is constant. If $f$ were not constant, there exist two real numbers $a<b$ such that $f(M)=[a, b]$. Note that all the singularities of $X$ are hyperbolic, hence there are at most finitely many of them. In particular, there exists a
non-trivial open interval $I\subset\mathbb{R}$ such that $I\subset f(M)\setminus f({\rm Sing}(X))$. Note that $f:M\setminus {\rm Sing}(X)\to\mathbb{R}$ is $C^3$, by Morse-Sard theorem, almost every value in $I$ is a regular value.
Take a regular value $c\in I$. Hence, $S_c:=f^{-1}({c})$ is a compact surface that does not
contain any singularity of $X$. Furthermore, since $f$ satisfied that $X(f)\equiv0$, then invariant along orbit of $X$, we know that $S_c$ is invariant set of $\varphi_t$, hence $X|_{S_c}$ is a $C^3$
non-singular vector field on $S_c$. Since $S_c$ is a two dimension manifold without boundary, and $X|_{S_c}$ is non-singular, by Poincar${\rm \acute{e}}$-Hopf theorem we know that $S_c$ is the torus $\mathbb{T}^2$ or the Klein bottle $2\mathbb{P}^2$. Note that $X$ is rescaling kinematic expansive on $S_c$ and $\|X(x)\|$ has a positive lower bound on $S_c$, we can see that the $\varphi_t|_{S_c}$ is kinematic expansive. Denote by $\tilde{\varphi}_t$ the flow on $\mathbb{T}^2$ which is the lift of $\varphi_t|_{S_c}$. Then we can easily check that $\tilde{\varphi}_t$ is also kinematic expansive, this contradict with Lemma \ref{lem10}. We conclude that $f$ is constant, and this implies that the $C^3$-centralizer of $X$ is trivial. This ends the proof of Corollary C.

\bigskip

\noindent{\it Proof of Corollary D.} According to Theorem 4 of \cite{BL}, there is an open dense set $\mathcal{U}\subset\mathcal{X}^1(M)$ such that the chain recurrent set $\mathcal{R}(X)$ of star flow $X\in\mathcal{U}$ is multisingular hyperbolic, hence the chain recurrent set $\mathcal{R}(X)$ is rescaling expansive by Theorem A of \cite{WW}, that is, there is $\delta_0>0$ such that for any $\delta\in(0,\delta_0]$, if $x,y\in\mathcal{R}(X)$ and an increasing continuous function $\theta:\mathbb{R}\to\mathbb{R}$ satisfy $d(\varphi_t(x), \varphi_{\theta(t)}(y))\leq \delta\|X(\varphi_t(x))\|$ for all $t\in\mathbb{R}$, then $y\in\varphi_{[-3\delta, 3\delta](x)}$, hence the chain recurrent set $\mathcal{R}(X)$ is rescaling kinematic expansive, and then rescaling separating. By applying Theorem A we know that for any $Y\in\mathcal{Z}^1(\varphi_t|_{\mathcal{R}(X)})$, there is a continuous function $A:\mathcal{R}(X)\setminus {\rm Sing}(X)\to \mathbb{R}$ such that $Y(x)=A(x)X(x)$ for any $x\in\mathcal{R}(X)$. Hence for any $x\in \mathcal{R}(X)$ we have $Y(x)$ is collinear to $X(x)$. This ends the proof of Corollary D.

\bibliographystyle{amsplain}

\begin{thebibliography}{10}


\bibitem {BFH} L. Bakker, T. Fisher, B. Hasselblatt, \textit{Centralizers of hyperbolic and kinematic-expansive flows.} Math. Res. Rep. \textbf{2} (2021), 21--44.

\bibitem {BF} L. Bakker, T. Fisher, \textit{Open sets of diffeomorphisms with trivial centralizer in the $C^1$-topology.}  Nonlinearity. \textbf{27} (2014), 2869--2885.

\bibitem {BW} R. Bowen, P. Walters, \textit{Expansive one-parameter flows.} J. DifferentialEquations. \textbf{12} (1972), 180--193.

\bibitem {BL} C. Bonatti, A. D. Luz, \textit{Star fows and multisingular hyperbolicity.} Journal of the European Mathematical Society. \textbf{23(8)} (2017), 2649--2705.

\bibitem {BRV} W. Bonomo, J. Rocha, P. Varandas, \textit{The centralizer of Komuro-expansive flows and expansive $\mathbb{R}^d$ actions.} Math. Z. \textbf{289} (2018), 1059--1088.

\bibitem {BCW} C. Bonatti, S. Crovisier, A. Wilkinson, \textit{The $C^1$ generic diffeomorphism has trivial centralizer.} publications Math¨¦matiques de l¡¯IH¨¦S. \textbf{109(1)} (2009), 185--244.

\bibitem {F1} T. Fisher, \textit{Trivial centralizers for axiom A diffeomorphisms.} Nonlinearity. \textbf{21(11)} (2008), 2505--2517.

\bibitem {LOS} M. Leguil1, D. Obata, B. Santiago, \textit{On the centralizer of vector fields: criteria of triviality and genericity results.} Math. Z. \textbf{297} (2021), 283--337.

\bibitem {K} N. Kopell, \textit{Commuting diffeomorphisms, Globa Analysis} Proc. Sympos. Pure Math. \textbf{XIV} (1970), 165--184.

\bibitem {KM} K. Kato, A. Morimoto, \textit{Topological stability of Anosov flows and their centralizers.} Topology. \textbf{12} (1973), 255--273.

\bibitem {L} S. T. Liao, \textit{The Qualitative Theory of Differential Dynamical Systems.} Science Press, 1996.

\bibitem {M} L. Markus, \textit{Structurally stable differential systems.} Ann. of Math. \textbf{73} (1961), 1--19.

\bibitem {MPP} C. Morales, M. Pacifico, E. Pujals, \textit{Robust transitive singular sets for 3-flows are partially hyperbolic attractors or repellers.} Ann. of Math. \textbf{160} (2004), 375--432.

\bibitem {Oka} M. Oka, \textit{Expansive flows and their centralizers.} Nagoya Math. J. \textbf{64} (1976), 1--15.

\bibitem {PY1} J. Palis, J. C. Yoccoz, \textit{Centralizers of Anosov diffeomorphisms on tori.} Ann. Sci. ¨¦cole Norm. Sup. \textbf{(4)22(1)} (1989), 98--108.

\bibitem {P} R. Plykin, \textit{On the structure of centralizers of Anosov diffeomorphisms of a torus.} Uspekhi Mat. Nauk. \textbf{53(6)} (1998), 259-260.

\bibitem {Sad} P. Sad, \textit{Centralizers of vector fields.} Topology. \textbf{18} (1979), 97--104.

\bibitem {SS1} S. Smale, \textit{Dynamics retrospective: great problems, attempts that failed.} Nonlinear science: the next decade, Los Alamos, NM, 1990. Phys. D 51. \textbf{52} (1991), 267--273.

\bibitem {SS2} S. Smale, \textit{Mathematical problems for the next century.} Math. Intell. \textbf{20} (1998), 7--15.

\bibitem {W} P. Walters, \textit{Homeomorphisms with Discrete Centralizers and Ergodic Properties.} Math. System Theory. \textbf{4(4)} (1970), 322--326.

\bibitem {WW} X. Wen, L. Wen, \textit{A rescaled expansiveness of flows.} Trans. Amer. Math. Soc. \textbf{371(5)} (2019), 3179--3207.






\end{thebibliography}

\end{document}